\documentclass[a4paper,12pt]{article}
\usepackage{amssymb,amsthm,amsmath,amsfonts}
\usepackage{graphicx}
\usepackage[left=1.5cm,right=1.5cm,top=2cm,bottom=2cm]{geometry}
\usepackage{cite}
\usepackage{color}
 \usepackage{mathrsfs}

\newtheorem{theorem}{Theorem}[section]
\newtheorem{lemma}[theorem]{Lemma}
\newtheorem{corollary}[theorem]{Corollary}
\newtheorem{proposition}[theorem]{Proposition}
\newtheorem{definition}[theorem]{Definition}
\newtheorem{example}[theorem]{Example}

\theoremstyle{remark}
\newtheorem{remark}{Remark}

\newcommand{\bR}{{\mathbb{R}}}

\newcommand{\cA}{{\mathcal{A}}}

\newcommand{\cQ}{\mathcal{Q}}
\newcommand{\cq}{ {\cQ_p}}

\title{Möbius invariant weighted reduced quaternionic $\cq$-modules}
\small{\author{Isidro Paulino-Basurto$^{(1)}$, Jos\'e Oscar Gonz\'alez-Cervantes$^{(1)}$,\\ Juan Bory-Reyes$^{(2)}$, Lino Feliciano Res{\'e}ndis-Ocampo$^{(3)}$} 
\vskip 1truecm
\date{\small $^{(1)}$ Departamento de Matem\'aticas, ESFM-Instituto Polit\'ecnico Nacional. 07338, Ciudad M\'exico, M\'exico\\ Email: jogc200678@gmail.com, isidropaba21@gmail.com\\ $^{(2)}$ {SEPI, ESIME-Zacatenco-Instituto Polit\'ecnico Nacional. 07338, Ciudad M\'exico, M\'exico}\\Email: juanboryreyes@yahoo.com\\ $^{(3)}${Departamento de Ciencias Básicas. División de Ciencias Básicas e Ingeniería. Unidad Azcapotzalco. Universdad Autonóma Metropolitana. 02128, Ciudad M\'exico, M\'exico\\Email: lfro@azc.uam.mx}}

\begin{document}

\maketitle

\begin{abstract}
In this paper, we introduce some weighted reduced quaternionic modules induced by a reduced quaternionic Cauchy-Riemann operator associated to arbitrary orthogonal bases of $\mathbb R^3$. A weighted conformal invariant property of the modules is given. 
\end{abstract}

\section{Introduction} 
Basic properties of the functional complex spaces $\cq$ are well known in the literature, namely, their invariance under  M\"obius transformations  of the unit disk. These can be found for instance in \cite{AlemanMas, PeAraFis, Peloso, PRT, Xiao} and the references quoted therein. 

Quaternionic analysis in $\mathbb R^3$ is described as a theory of quaternionic algebra-valued functions of three real variables based on   null-solutions of the so-called generalized $\psi$-Cauchy Riemann operator, where $\psi$ is a arbitrary ortonormal basis of $\mathbb R^3$, usually referred to as structural set. The theory of $\psi$-hyperholomorphic functions generalizes  the classical theory of complex holomorphic { function theory} and offers a refinement of classical harmonic analysis, see e.g. \cite{AB, MS, S, SV1, SV2}.

In recent years, an alternative approach to quaternionic analysis in $\bR^3$ explores how to adapt and extend the classical quaternionic analysis when using reduced quaternionic-valued functions. This approach leads to a broader and richer  mathematical framework with potential applications in various scientific and engineering disciplines, see \cite{BGL, GN1, GN2, GN3, GNL, MP, Ng, JSK, Pi}.

In \cite{AljuaidBakhit, SayedOmran, G, GKST, GM, LinoTovar, MART}, a definition and basic properties  of $\cq$-spaces for quaternion-valued functions of three or four real variables were introduced, using different scales of weighted spaces of functions, which generalize several properties of the usual complex spaces $\cq$. 

In \cite{GLS1, GLS2, GLS3, G} the authors introduced and used what they called conformal weights in order to establish isometric isomorphisms between two function spaces associated with conformally equivalent domains and  extending what is known as the conformal invariance property of holomorphic function spaces. 

The study of the invariance of higher-dimensional $\cq$-spaces under M\"obius transformations has also been addressed in the literature; see \cite{CWR, CD, CDGS}.

The paper is organized as  follows: Section \ref{Prel} presents some basic properties of the quaternionic ${}^{\psi}{\mathcal{Q}}_{p,h,\rho}$ spaces associated to a $\psi$-Cauchy-Riemann operator. The main results, such as a conformal covariance property of the $\psi$-Cauchy-Riemann operator  and a conformal invariant property of quaternionic ${}^{\psi}{\mathcal{Q}}_{p,h,\rho}$ spaces are presented in Section \ref{section3}. Finally, conclusion and future works are also presented at the end.
\section{Preliminaries}\label{Prel}

\subsection{A generalized  $\psi$-Cauchy-Riemann operator  in $\mathbb R^3$}
The standard basis of the skew-field of quaternions,  $\mathbb H$, is $\{{\bf e_0}, {\bf e_1}, {\bf e_2}, {\bf e_3}\}$,  where ${\bf e_0}$ is the unit and $\{{\bf e_1}, {\bf e_2}, {\bf e_3}\}$ are the imaginary units.  
Thus, every $q\in\mathbb H$ can be written as
$$q=x_0 {\bf e_0}+x_{1} {\bf e_1}+x_{2} {\bf e_2}+x_{3} {\bf e_3},$$ 
where $x_{k}\in \mathbb R$ for  $k= 0,1,2,3$. 
Addition of quaternions is defined componentwise
and its multiplication  obeys the following rules: 
$$ {\bf e_1}^{2}= {\bf e_2}^{2}= {\bf e_3}^{2}=-{\bf e_0},$$
$$ {\bf e_1}\, {\bf e_2}=- {\bf e_2}\, {\bf e_1}= {\bf e_3};\ \,   {\bf e_2}\, {\bf e_3}=- {\bf e_3}\, {\bf e_2}= {\bf e_1};\ \, {\bf e_3}\, {\bf e_1}=- {\bf e_1}\, {\bf e_3}= {\bf e_2}.$$ 
The  quaternionic conjugation of $q\in  \mathbb H$ is ${\overline q}:=x_0 {\bf e_0}-x_{1}{\bf e_1}-x_{2}{\bf e_2}-x_{3}{\bf e_3}$  and the norm  of $q$ is introduced as 
$$|q|  := \sqrt{ x^{2}_{0}+x^{2}_{1}+x^{2}_{2}+x^{2}_{3}}= \sqrt{ q\,\overline q}=\sqrt{{\overline q}\,q}.$$

Given  $u, v\in\mathbb H$ its scalar product is 
$$\langle u, v\rangle:=\frac{1}{2}(\bar u v + \bar v u).$$
An ordered set $ \{\psi_0=1, \psi_1, \psi_2, \psi_3 \}  \subset \mathbb H$ is called  structural set of $\mathbb H$  if it satisfies the orthonormality condition:  
$$ { {\overline{\psi_k} \psi_s+\overline{\psi_s}\psi_k}}=2 \langle \psi_k, \psi_s\rangle =2\delta_{k,s},$$
for $k,s=0,1,2,3$, where $\delta_{k,s}$ is the Kronecker's symbol. { {As an immediate consequence of this definition we have  the following equalities
 \begin{equation} \overline{\psi_s} =-\psi_s, \quad \psi_s^2 =-1\quad  \mbox{and}\quad   \psi_k\psi_s= -\psi_s\psi_k,\quad{ k\neq s}\quad \mbox{ for}\quad  k,\ s=1,\, 2,\, 3 . \label{eq1} 
\end{equation}}}
Denote $\overline{\psi}= \{\overline{\psi_0}, \overline{\psi_1}, \overline{\psi_2}\}$ and by $\psi_{std}:= \{{\bf e}_0, {\bf e}_1, {\bf e}_2  \} $ we mean the standard  structural set of $\mathbb R^3$. 

In our case we use the base $\psi= \{\psi_0, \psi_1, \psi_2\}$ of $\mathbb R^3$ and the real subspace of $\mathbb H$  span of $\{\psi_0, \psi_1, \psi_2\}$, which is denoted by ${}^{\psi}\mathcal A$. Note that ${}^{\psi}\mathcal A$ is not a sub-algebra of $\mathbb H$.

In addition, the real three-dimensional vector space $\mathbb R^3$ will be embedded in $\mathbb H$ by identifying the element $x= (x_0,x_1,x_2)\in \mathbb R^3$ with $x_0\psi_0 + x_1\psi_1+ x_2 \psi_2\in {}^{\psi}\mathcal A$.  

By $\textrm{Sc}(x)= x_0$ we mean the scalar part of $x$ and denote $\mathbb B(0,1)=\{x\in {}^{\psi}\cA \ \mid \ |x|<1\}$.

Let $\Omega\subset\mathbb R^3 \cong {}^{\psi}\mathcal A$ be an open bounded domain with smooth boundary $\partial \Omega$ and $\overline\Omega$ denote its closure. Every $\mathbb H$-valued functions $f$ defined on $\Omega$ has the form $f=\sum_{k=0}^3 f_k \psi_k$, with $f_k:\Omega\to \mathbb R$, for $ k= 0,1,2,3$. The function $f$ is said to be continuously differentiable if each component$ f_k$ has {  this property}. As usual, the corresponding space is denoted by $C^{1}(\Omega, \mathbb H)$.
  
We define the left and the right-$\psi$-Cauchy-Riemann operators acting on $f \in C^1(\Omega, \mathbb H )$, respectively, by   
$${}^{{\psi}}\!Df := \sum_{k=0}^2 \psi_k( \partial_k f), \quad {}^{{\psi}}\!D_r f :=  \sum_{k=0}^2 (\partial_k f) \psi_k,$$ 
where we are using the partial differentiation operator $\partial_k f :=\displaystyle \frac{\partial f}{\partial x_k}$ for all $k$. 

These operators decompose the three-dimensional Laplace operator $\Delta_{\mathbb R^3}$ according to the real components of $x$. 
$$ { {^\psi\!  D}} \circ  ^{\overline{\psi}}\!\!  D=^{\overline{\psi}}\!\! D \circ ^{{\psi}}\!\! D =\,^{{\psi}}\! D_r\circ ^{\overline{\psi}}\!\! D_r=^{\overline{\psi}}\!\! D_r \circ ^{{\psi}}\!\! D_r =\bigtriangleup_{\mathbb R^3}, $$
where {} 
$^{\overline{\psi}}\!\! \  D f := \displaystyle\sum_{k=0}^2 {\overline{ \psi_k}}( \partial_k f)$ {} and {} $^{\overline{\psi}}\!\! \ D_r f := \displaystyle \sum_{k=0}^2 (\partial_k f) {\overline{ \psi_k}} $.

A function $f$ is said to be left-$\psi$-hyperholomorphic, or $\psi$-hyperholomorphic for short, (right-$\psi$-hyperholomorphic) on $\Omega$ if ${}^{{\psi}}\!D[f]=0$ (${}^{{\psi}}\!D_r[f]=0$) on $\Omega$.

Given $h\in C^1(\Omega,\mathbb H)$ the $\mathbb H$-right module ${}^{\psi}\!\mathcal M^h(\Omega)$ and (the $\mathbb H$-left module ${}^{\psi}\!\mathcal M_r^h(\Omega)$) consists of $f\in  C^1(\Omega,\mathbb H)$ such that ${}^{{\psi}}\!D[hf]=0$ (${}^{{\psi}}\!D_r[f h]=0$) on $\Omega$.

Given functions $g:\Xi \subset \mathbb H \to \Omega$ and $h:\Omega\to \mathbb H$, define the composition and multiplication  operators, respectively by
$$ W_g[f]= f\circ  g\qquad\hbox{and}   \qquad {}^h  M [f] = h \cdot f, \quad \mbox{for every}\qquad  f\in {C^1(\Omega,\mathbb H)}.$$ 
Then $f\in {}^{\psi}\!\mathcal M^h(\Omega)$ if and only if ${}^h \! M [f]$ is $\psi$-hyperholomorphic  function on $\Omega$.
\subsection{Quaternionic ${}^\psi\!{\mathcal{Q}_{p,\rho}^h}$ spaces} 

In this subsection we are going to extend the $\cq$ spaces presented in  \cite{GKST,GM} to the function space ${}^{\psi}\mathcal M^h(\Omega)$.

Given $a\in \mathbb B(0,1)\subset {}^{\psi}\mathcal A$ the M\"obius transformation $$ \varphi _a(x) = (a - x)(1 - \bar a x  )^{-1},$$
preserves $\mathbb B(0,1)$, i.e., $\varphi _a(\mathbb B(0,1))= \mathbb B(0,1)$. In addition, 
$$ g(x,a)  =\frac{1}{4\pi} \left({ \frac{1}{|\varphi_a(x)|} -1}\right), $$
is  the modified fundamental solution of the Laplacian in ${}^{\psi}\cA\cong \mathbb R^3$ composed with $\varphi _a$. See \cite{GKST,GM} for the standard base.   Given { {$p>0$  }} denote
$$g^p(x,a)  =\frac{1}{4^p\pi^p} \left( {  \frac{1}{|\varphi_a(x)|} -1}\right)^p.$$

{Let $\rho:{\mathbb B(0,1)}\to \mathbb R^+$ be a measurable function  and  $f \in {}^{\psi}\!\mathcal M^h(\mathbb B(0,1))$}. Consider the semi-norms
{ 
 \begin{align*} 
 {}^{ \psi}\!\mathcal B^h_\rho(f) :=&  \sup_{x\in \mathbb B(0,1)} (1-|x|^2)^{\frac{3}{2}} |{}^{\overline{ \psi} }\!D  f (x)| \rho(x), 
 \\ 
 {}^{ \psi}\!\mathcal Q^h_{p,\rho}(f) :=&  \sup_{a\in \mathbb B(0,1)}   \int_{\mathbb B(0,1)}| {}^{\overline{ \psi} }\!D   f (x)|^2 g^p(x,a) \rho(x) dB_x.
 \end{align*}
}
\begin{definition}
The spatial (or three-dimensional)  $h$-$\rho$-Bloch space ${}^{\psi}\mathcal B^h_\rho$ is the $\mathbb H$-right  module of all $f \in  {}^{\psi}\!\mathcal M^h({\mathbb B(0,1)})$ 
{with finite seminorm, i.e. } ${}^{ \psi}\!B^h_\rho(f ) < \infty$.

The $\mathbb H$-right module of all  $f\in {}^{\psi}\mathcal M^h({\mathbb B(0,1)}),$ which satisfy ${}^{ \psi}\! \mathcal Q^h_{p,\rho}( f ) < \infty $, is called ${}^{\psi}\!\mathcal Q^h_{p,\rho}$-space.

The $\mathbb H$-right  module of all $f\in {}^{\psi}\!\mathcal M^h({\mathbb B(0,1)}),$ which satisfy the condition
$$\int_{\mathbb B(0,1)} |{}^{ \overline{ \psi }}\! D  f(x)|^2\rho(x)dB_x{ <\infty},$$
where $dB_x$ denotes the three-dimensional volume {element}, is called spatial (or three-dimensional) Dirichlet space ${}^{\psi}\mathcal D^h_\rho$.
\end{definition}

\begin{remark}\label{remarkbasic}
The function quaternionic modules ${}^{ \psi_{std}}\mathcal B^1_1 $, ${}^{ \psi_{std}}\mathcal Q^1_{p,1}$ and  ${}^{ \psi_{std}} \mathcal D^1_1$ are studied in 
\cite{GKST}, obtaining results such as 
\begin{enumerate}
\item  For $0\leq p < 3$ one sees that ${}^{ \psi_{std}}\mathcal Q^1_{p,1}\subset {}^{\psi_{std}}\mathcal B^1_1$. \item  If $f\in {}^{ \psi_{std}}\mathcal M(\mathbb B(0,1))$ then  the following 
conditions are equivalent:
\begin{enumerate}
\item  $f\in  {}^{\psi_{std}}\mathcal B^1_1 $.
\item  ${}^{\psi_{std}} \mathcal Q^1_p(f)<\infty$  for all  $2<p<3$.
\item  ${}^{\psi_{std}} \mathcal Q^1_p(f)<\infty$ for some $p>2$.
 \end{enumerate}
\item Given $f\in {}^{ \psi_{std}}\mathcal M(\mathbb B(0,1))$ and  $ 1\leq p < 2.99$. Then $f \in {}^{ \psi_{std}}\mathcal Q^1_{p,1}$ if and only if 
$$\sup_{x\in \mathbb B(0,1)} \int_{\mathbb B(0,1)} | \overline{ {}^{ \psi_{std}} D} f(x)|^2(1- |\varphi_a (x)|^2)^p dB_x <\infty.$$
\item Given $f\in {}^{ \psi_{std}}\mathcal M(\mathbb B(0,1))$ and  $ 0< p \leq 1$. Then $f \in {}^{ \psi_{std}}\mathcal Q^1_{p,1}$ if and only if 
$$\sup_{{a}}\in \mathbb B(0,1) \int_{\mathbb B(0,1)} | \overline{ {}^{ \psi_{std}} D} f(x)|^2(1- |\varphi_a (x)|^2)^p dB_x <\infty.$$
\item If  $0 <p < q$ then  ${}^{ \psi_{std}}\mathcal Q^1_{ { { p}},1} \subset {}^{ \psi_{std}}\mathcal Q^1_{{ { q}},1} $.
\end{enumerate}
\end{remark}
\section{Main results}\label{section3}
\subsection{Conformal property of ${}^{{\psi}}\cA$}

\begin{definition}
The basic  M\"obius transformations $T_i : {}^{\psi}\cA\to {}^{\psi}\cA$ for $i=1, \dots ,4$ are the following:
\begin{enumerate}   
\item Translation. $T_1(x): =   x+ u $, where $u\in {}^{\psi}\cA$ and $T_1(\infty)= \infty$.
\item Dilation.  $T_2(x) := \lambda x$, where $\lambda >0$ and $T_2(\infty)=\infty$.
\item Rotation $T_3(x) := r  x\psi_3 \overline{r} \, \overline{\psi_3} $, where $r\in \mathbb H$ with $\|r\|=1$  and $T_3(\infty)=\infty$. 
\item Inversion $T_4(x) :={{x^{-1}}}= \frac{\bar x}{|x|^2}$  for all $x\in\ {}^{\psi}\mathcal A \setminus\{0\}$,   $T_4(\infty)=0$  and $T_4(0)=\infty$.
\end{enumerate}
In addition, a M\"obius transformation  on ${}^{\psi}\mathcal A $ is
{a finite composition} of the four previous {elementary} transformations.  
\end{definition}

\begin{remark}
It is well-known that {every} quaternionic rotations {is of the form} $R_{r,s}(q)= rqs$ for all $q\in \mathbb H$, where $r,\ s\in \mathbb H$ {satisfy} $|r|=|s|=1$. In particular, the quaternionic rotations 
{{preserving}}  the real {{subspace}}  span of $\{\psi_1, \psi_2,\psi_3\}$ are {{precisely those of the form}} $R_{r,\bar r}$. 

Note that $R_{1,\psi_3}({}^{\psi}\mathcal A ) $ is   the real space span of $\{\psi_1, \psi_2,\psi_3\}$ and the inverse mapping  of $R_{1,\psi_3}$  is  $R_{1,\overline{\psi_3}}$. Therefore,    any quaternionic rotation that preserve ${}^{\psi}\mathcal A$ is given by $R_{1,{ {\overline{ \psi_3}}}} \circ R_{r,\bar r} \circ R_{1,\psi_3}$.
\end{remark}

\begin{proposition}\label{Moebius} {{A mapping}}
$T$ is a M\"obius transformation on ${}^{\psi}\mathcal A$ if and only if {{it admits the representation}}
\begin{align}\label{MoebiusR3} 
T(x)= (ax+b)(cx+d)^{-1}, x\in {}^{\psi}\mathcal A, 
\end{align}  
and $a,b,c,d\in\mathbb H$ satisfy:
\begin{enumerate}
    \item If $c=0$ then  $bd^{-1}  \in {}^{\psi}\cA$  and $$\dfrac{d^{-1}}{|d^{-1}|}  =  \psi_3 \dfrac{\overline{a}}{|a|} \overline{\psi _3}. $$
    \item If $c\neq 0$ then $ac^{-1}, d(b - ac^{-1}d)^{-1} \in {}^{\psi}\cA$
    and
     \[
    \dfrac{(b  -  ac^{-1}d)^{-1}}{|(b  -  ac^{-1}d)^{-1}|}  =  \psi_3 \dfrac{\overline{c}}{|c|}\overline{\psi_3} . 
    \]
\end{enumerate}
In addition,  if $c=0$ then  $T(\infty) = \infty$. But if $c\neq 0$ then 
$T( - c^{-1}d) =\infty$ and $T( \infty)= ac^{-1}$
\end{proposition}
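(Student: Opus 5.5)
We prove both implications by reducing to the four elementary transformations $T_1,\dots,T_4$ and tracking how the coefficients $a,b,c,d$ transform. The guiding idea is the classical identity
\[
(ax+b)(cx+d)^{-1}=ac^{-1}+\bigl(b-ac^{-1}d\bigr)\,\bigl(cx+d\bigr)^{-1}\qquad (c\neq 0),
\]
which follows from $ax+b=ac^{-1}(cx+d)+(b-ac^{-1}d)$. For $c=0$ one has instead
\[
(ax+b)d^{-1}=axd^{-1}+bd^{-1}.
\]
Throughout we use that the rotations preserving ${}^{\psi}\cA$ are exactly the maps $x\mapsto r x\psi_3\bar r\,\overline{\psi_3}$ with $|r|=1$, by the Remark preceding the proposition.

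\emph{Sufficiency.} Assume first $c=0$. Write $a=|a|\,\alpha$ and $d^{-1}=|d^{-1}|\,\delta$ with $|\alpha|=|\delta|=1$. The hypothesis gives $\delta=\psi_3\bar\alpha\,\overline{\psi_3}$, so
\[
axd^{-1}=\frac{|a|}{|d|}\,\alpha x\psi_3\bar\alpha\,\overline{\psi_3}.
\]
This is $T_2\circ T_3$ with $r=\alpha$ and $\lambda=|a|/|d|$. Since $bd^{-1}\in{}^{\psi}\cA$, adding it is a translation $T_1$. Hence $T=T_1\circ T_2\circ T_3$, and $T(\infty)=\infty$.

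Now assume $c\ne0$. Write
\[
(b-ac^{-1}d)(cx+d)^{-1}=(b-ac^{-1}d)\bigl(x+c^{-1}d\bigr)^{-1}c^{-1}.
\]
We need $c^{-1}d\in{}^{\psi}\cA$, so that $x\mapsto x+c^{-1}d$ is a translation. It should follow from $d(b-ac^{-1}d)^{-1}\in{}^{\psi}\cA$ combined with the norm/direction condition. This is the step I would check most carefully: it may require rewriting the condition, or noting that $c^{-1}d$ equals a conjugate of $d(b-ac^{-1}d)^{-1}$ under the admissible rotation. The remaining pieces are then standard. The inversion $T_4$ maps ${}^{\psi}\cA$ to itself because $\bar y\in{}^{\psi}\cA$. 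The map $y\mapsto (b-ac^{-1}d)\,y\,c^{-1}$ is a dilation times a rotation of the form $r y\psi_3\bar r\,\overline{\psi_3}$, exactly by the stated unit-direction condition, applied as in the case $c=0$. Finally we translate by $ac^{-1}\in{}^{\psi}\cA$. The pole $-c^{-1}d$ and the limit value $T(\infty)=ac^{-1}$ can be read off from this decomposition.

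\emph{Necessity.} First check that each elementary map has the form \eqref{MoebiusR3} with admissible coefficients:
\[
T_1:(1,u,0,1),\qquad T_2:(\lambda,0,0,1),\qquad T_3:(r,0,0,\psi_3 r\,\overline{\psi_3})\ \text{up to normalization},\qquad T_4:(0,1,1,0).
\]
Next show that composition of linear-fractional maps corresponds to multiplying the $2\times2$ quaternionic matrices $\begin{pmatrix}a&b\\c&d\end{pmatrix}$. Note the one-sided inverse structure: $T(x)=(ax+b)(cx+d)^{-1}$ composes correctly with right-multiplication conventions. It then remains to verify that the admissibility conditions are preserved under multiplication by each generator.

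A cleaner route avoids this bookkeeping. Any Möbius $T$ with $T(\infty)=\infty$ is an affine map $\lambda R(x)+u$. Otherwise $T$ is
\[
T_1\circ(T_2T_3)\circ T_4\circ T_1',
\]
normalizing via the point $T^{-1}(\infty)$. Expanding this normal form produces coefficients satisfying conditions 1 or 2 directly. I expect the main obstacle to be this normal-form argument, namely justifying that every finite composition reduces to it. I would handle it by noting that the Möbius group is generated by affine maps and one inversion, and that conjugating affine maps by $T_4$ can be absorbed into the normal form.
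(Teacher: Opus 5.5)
Your sufficiency half is essentially the paper's proof. For $c=0$ it is the same decomposition. For $c\neq0$ the paper writes $v(cx+d)^{-1}=(cxv^{-1}+dv^{-1})^{-1}$ with $v=b-ac^{-1}d$, so the hypotheses are used verbatim: rotation--dilation $x\mapsto cxv^{-1}$, translation by $dv^{-1}$, inversion, then translation by $ac^{-1}$. Your variant $v(x+c^{-1}d)^{-1}c^{-1}$ also works. The step you flagged is true: $c^{-1}d=c^{-1}(dv^{-1})v$ is the image of $dv^{-1}\in{}^{\psi}\cA$ under the inverse of the admissible map $x\mapsto cxv^{-1}$. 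Moreover, $y\mapsto vyc^{-1}$ is admissible because $q\mapsto\psi_3 q\overline{\psi_3}$ is an involution, so the direction identity can be read in either order.

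The necessity half has a gap that cannot be closed. The paper's own argument consists only of the two decompositions, i.e.\ the ``if'' direction, and the ``only if'' direction is false as stated. Take $T=T_4\circ T_1\circ T_4$ with $u=1$. Then $T(x)=(x^{-1}+1)^{-1}=x(x+1)^{-1}$, so $(a,b,c,d)=(1,0,1,1)$ and $v=-1$. Hence $v^{-1}/|v^{-1}|=-1\neq 1=\psi_3\overline{c}\,\overline{\psi_3}/|c|$.

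Choosing other coefficients does not help, for an orientation reason. By the decompositions, every map satisfying condition 1 is an orientation-preserving similarity. Every map satisfying condition 2 contains exactly one inversion, so it reverses the orientation of ${}^{\psi}\cA$ (the differential of $x\mapsto x^{-1}$ at $x=1$ is $-\mathrm{id}$). But $T_4\circ T_1\circ T_4$ preserves orientation and sends $\infty$ to $1$, so it fits neither case. Similarly, with $\tau_t(x)=x+t$, one checks $(\tau_{-1}\circ T_4\circ\tau_1\circ T_4\circ\tau_{-1}\circ T_4)(x)=-x$. This map fixes $\infty$ but violates condition 1.

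Concretely, your matrix route fails at the inversion generator. The matrices $\begin{pmatrix}0&1\\1&0\end{pmatrix}$ and $\begin{pmatrix}1&1\\0&1\end{pmatrix}$ are each admissible, but $\begin{pmatrix}0&1\\1&0\end{pmatrix}\begin{pmatrix}1&1\\0&1\end{pmatrix}\begin{pmatrix}0&1\\1&0\end{pmatrix}=\begin{pmatrix}1&0\\1&1\end{pmatrix}$ is not. Your normal-form route fails too: ``every M\"obius $T$ fixing $\infty$ is $\lambda R(x)+u$ with $R$ admissible'' is false for $x\mapsto -x$, and conjugation by $T_4$ cannot be absorbed because of the orientation parity.

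What your bookkeeping can prove is a corrected statement in which both unit-direction identities are required only up to a sign $\pm$. The minus sign corresponds to composing with $x\mapsto -x$, which does lie in the group by the six-fold composition above.
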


\begin{proof} 
To facilitate the computations, we {{represent every quaternion with respect to the basis}}
$\{\psi_0, \psi_1, \psi_2, \psi_3 \}$ and  the proof is a consequence of the following identities:
\begin{enumerate}
\item If $c=0$ then
\[
T(x)  =    \dfrac{a}{|a|} |a|\, |d^{-1}| x \dfrac{d^{-1}}{|d^{-1}|} + bd^{-1} .
\]
Denoting 
\[
T_2 (x)  =  |a| |d^{-1}|x  , \qquad
T_3 (x)  =  \dfrac{a}{|a|}x\psi_3 \dfrac{\overline{a}}{|a|} \overline{\psi_3} , \qquad
T_1 (x)  =  x + bd^{-1}  ,
\]
we obtain 
\begin{equation} \label{TransMob1}
    T   =  T_1 \circ T_3 \circ T_2 .
\end{equation}
 
 \item If $c\neq 0$ then  
 \begin{align*} 
T(x) =&   ac^{-1} + v(cx+d)^{-1}  =  ac^{-1}  +  (cxv^{-1} + dv^{-1})^{-1}
\\
=&   \left(   \dfrac{c}{|c|}|c|\, |v^{-1}| x \dfrac{v^{-1}}{|v^{-1}|} + dv^{-1}  \right)^{-1}   + ac^{-1}  ,
\end{align*} 
where $v:= b  -  ac^{-1}d$. Therefore,
\begin{equation} \label{TransMob2}
    T   =    T_1\circ T_4 \circ S_1 \circ T_3 \circ T_2  ,
\end{equation}
where
\begin{align*}
& T_2(x)=   |c|\, |v^{-1}| x,   
\quad 
T_3(x)=     \dfrac{c}{|c|} x \dfrac{v^{-1}}{|v^{-1}|},   
\quad 
S_1(x)=      x +dv^{-1},  \\
&T_4(x)=  x^{-1},  
\quad T_1(x)=     x + ac^{-1} .  
\end{align*} 
\end{enumerate}
\end{proof}

\begin{definition}
Two domains $\Omega, \Xi \subset {}^{\psi}\cA \cong \mathbb R^3$  are {{conformally}} equivalent domains, if there exists a M\"obius transformation $T$ such that  $T(\Omega)=\Xi$.
\end{definition}

The following result, proof of which is immediate from \eqref{eq1}, is useful to prove the conformal covariant property of ${}^{\psi}D$.
\begin{lemma}\label{lemmaeq1}
  Let $r\in \mathbb{H}$ and $q=  q_0\psi_0  +q_1 \psi_1 +q_2 \psi_2 +q_3\psi _3$. Then
\begin{itemize}
  \item[\rm a)] $\overline{r} \overline{\psi_3}\,  \overline{\psi_j}\,  \overline{\psi_3}=-\overline{r} \overline{\psi_j}$ for $j=0,\, 1,\, 2$.
  \item[\rm b)]  \[
\sum_{i=0}^2 \psi_i q \psi_i=  -q_0 \psi_0 +q_1 \psi_1 +q_2 \psi_2 +3q_3 \psi_3= -\overline{q} +2q_3 \psi_3=
-\overline{q} +2\langle q, \psi_3\rangle \psi_3 .\]
    \end{itemize}
\end{lemma}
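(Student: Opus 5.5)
Both items follow by expanding everything in the basis $\{\psi_0,\psi_1,\psi_2,\psi_3\}$ and using only the relations \eqref{eq1}, namely $\overline{\psi_s}=-\psi_s$, $\psi_s^2=-1$ and $\psi_k\psi_s=-\psi_s\psi_k$ for $k\neq s$, $k,s\in\{1,2,3\}$, together with $\psi_0=1$. No result beyond \eqref{eq1} and the orthonormality of the structural set is needed.

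For a), since $r$ only multiplies on the left, it suffices to show $\overline{\psi_3}\,\overline{\psi_j}\,\overline{\psi_3}=-\overline{\psi_j}$ for $j=0,1,2$.
\begin{itemize}
\item[$\bullet$] For $j=0$ we get $\overline{\psi_3}\,\overline{\psi_3}=(-\psi_3)(-\psi_3)=\psi_3^2=-1=-\overline{\psi_0}$.
\item[$\bullet$] For $j=1,2$ we get $\overline{\psi_3}\,\overline{\psi_j}\,\overline{\psi_3}=-\psi_3\psi_j\psi_3$. Anticommuting $\psi_3$ past $\psi_j$ turns this into $\psi_j\psi_3^2=-\psi_j=\overline{\psi_j}$. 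Because of the leading minus sign, I then expect the product to come out as $-\overline{\psi_j}$. This sign bookkeeping is the step I would double-check most carefully, since the intended identity depends on it.
\end{itemize}
Multiplying on the left by $\overline r$ then gives the claim.

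For b), I use linearity in $q$ and compute $\psi_i\psi_k\psi_i$ for each $i\in\{0,1,2\}$ and $k\in\{0,1,2,3\}$.
\begin{itemize}
\item[$\bullet$] For $i=0$ the map is the identity.
\item[$\bullet$] For $i\in\{1,2\}$: $\psi_i\psi_0\psi_i=-1$ and $\psi_i\psi_i\psi_i=-\psi_i$.
\item[$\bullet$] For $i\in\{1,2\}$ and $k\neq i$, $k\ge1$: $\psi_i\psi_k\psi_i=-\psi_k\psi_i^2=\psi_k$.
\end{itemize}
Summing over $i$, the coefficient of $q_0$ is $1-1-1=-1$. The coefficient of $q_1\psi_1$ is $1-1+1=1$, and that of $q_2\psi_2$ is $1+1-1=1$. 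The coefficient of $q_3\psi_3$ is $1+1+1=3$. This is the first equality.

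The second equality is immediate, since $-\overline q=-q_0+q_1\psi_1+q_2\psi_2+q_3\psi_3$. For the third, the orthonormality $\langle\psi_k,\psi_s\rangle=\delta_{k,s}$ together with bilinearity of $\langle\cdot,\cdot\rangle$ gives $\langle q,\psi_3\rangle=q_3$.

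The only real obstacle is sign bookkeeping. In particular, one must make sure the anticommutation relations are used for pairs involving $\psi_3$; these are covered by \eqref{eq1} with indices up to $3$.
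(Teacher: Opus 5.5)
\textbf{Part b) is fine; the gap is in part a) for $j=1,2$.} Your proof of b) is correct, and it is the term-by-term expansion via \eqref{eq1} that the paper intends (the paper only remarks that the lemma is immediate from \eqref{eq1}). The case $j=0$ of a) is also correct.

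For $j=1,2$, your own computation is right: $\overline{\psi_3}\,\overline{\psi_j}\,\overline{\psi_3}=-\psi_3\psi_j\psi_3=\psi_j\psi_3^2=-\psi_j=\overline{\psi_j}$. The leading minus sign was already used in the step $-\psi_3\psi_j\psi_3=\psi_j\psi_3^2$, where it cancels the sign coming from anticommutation. So no sign is left to invoke, and the sentence ``because of the leading minus sign, I then expect the product to come out as $-\overline{\psi_j}$'' is an unjustified sign flip. No argument can repair this step, because for $j=1,2$ and $r\neq 0$ the identity as printed is false. With the standard structural set, $\overline{{\bf e_3}}\,\overline{{\bf e_1}}\,\overline{{\bf e_3}}=-{\bf e_3}{\bf e_1}{\bf e_3}=-{\bf e_2}{\bf e_3}=-{\bf e_1}=\overline{{\bf e_1}}$, which is not $-\overline{{\bf e_1}}={\bf e_1}$.

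What does hold for all $j=0,1,2$ is $\overline{r}\,\overline{\psi_3}\,\overline{\psi_j}\,\overline{\psi_3}=-\overline{r}\,\psi_j$. This agrees with the printed form only for $j=0$, where $\psi_0=\overline{\psi_0}=1$; that is why your $j=0$ check worked. The corrected identity is also the one the paper actually uses in the rotation step of Proposition \ref{conformalproperyofD}. There, $-\overline{r}\,\overline{\psi_3}\,\overline{\psi_j}\,\overline{\psi_3}$ is replaced by $+\overline{r}\,\psi_j$ to produce the term $\tfrac{1}{2}\overline{r}\sum_{j}\psi_j\,\partial f/\partial y_j$. The printed version would instead give $\tfrac{1}{2}\overline{r}\,{}^{\overline{\psi}}\!D[f]$ there, and the covariance formula would not follow. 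You should have trusted your computation, flagged a) as misstated, and proved $\overline{\psi_3}\,\overline{\psi_j}\,\overline{\psi_3}=-\psi_j$ for $j=0,1,2$, rather than bending the sign to match the claim.
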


\begin{proposition}\label{conformalproperyofD}(Conformal covariant property of ${}^{\psi}D$)
Let $\Omega, \Xi \subset {}^{\psi}\cA \cong \mathbb R^3$ be   two conformal equivalent domains and  let $T$ be  a Möbius transformation   given by 
 \eqref{MoebiusR3} such that $T(\Omega) =\Xi$.
 Define 
 \begin{align*}
 A_T(x):= &\left\{  \begin{array}{ll}  \psi_3 \dfrac{\overline{a}}{|a|}\overline{ \psi_3} , & \ \ \textrm{if} \ \ c=0, \\  
  \psi_3 \dfrac{\overline{c}}{|c|}\overline{\psi_3}  \dfrac{\overline{cxv^{-1} + dv^{-1}}}{|cxv^{-1} + dv^{-1}|^3} , & \ \ \textrm{if} \ \ c\neq 0,  \end{array} \right. \\
B_T(x):= &\left\{  \begin{array}{ll} \ {{|d^{-1}|\overline{a}}}  , & \ \ \textrm{if} \ \ c=0 ,\\
 -|v^{-1}| \overline{c} \dfrac{cxv^{-1} + dv^{-1}}{|cxv^{-1} + dv^{-1}|^5} , & \ \ \textrm{if} \ \ c\neq 0,  \end{array} \right.
  \end{align*}
 where $v:= b  -  ac^{-1}d$. Then 
  \begin{align}
  \label{invarianCauchy}
 {}^{\psi}\!D[A_{T} f\circ T] = 
 B_{T}  \, {}^{\psi}\!D [f]\circ T, \quad  \textrm{on } \ \ \Omega, 
  \end{align}
    for  all $f\in C^{1}(\Xi,\mathbb H)$. 
\end{proposition}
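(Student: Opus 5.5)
The plan is to use the factorizations \eqref{TransMob1} and \eqref{TransMob2} from the proof of Proposition \ref{Moebius}. I will first establish a covariance law of the form ${}^{\psi}D[A\, f\circ S]=B\,({}^{\psi}D f)\circ S$ for each elementary map $S$ occurring there. Then I will compose these laws along the chain. The key observation is that if ${}^{\psi}D[A_1 g\circ S_1]=B_1 ({}^{\psi}Dg)\circ S_1$ and ${}^{\psi}D[A_2 f\circ S_2]=B_2({}^{\psi}Df)\circ S_2$, then with $g=A_2 f\circ S_2$ we obtain
\begin{align*}
{}^{\psi}D[(A_1 A_2\circ S_1)\, f\circ S_2\circ S_1]=B_1 (B_2\circ S_1)({}^{\psi}Df)\circ S_2\circ S_1 .
\end{align*}
So the weights multiply, with the appropriate compositions. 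The final step is to simplify these products to the stated $A_T$ and $B_T$.

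For the elementary maps I will proceed as follows. Translations $T_1$ and $S_1$ commute with each $\partial_k$, so $A=B=1$. For a dilation $T_2(x)=\lambda x$, the chain rule gives $A=1$ and $B=\lambda$. For the rotation $T_3(x)=r x\psi_3\bar r\,\overline{\psi_3}$, I write $\partial_k (f\circ T_3)=\sum_j \partial_j f\circ T_3\cdot\langle T_3(\psi_k),\psi_j\rangle$. The task is to find a constant $A$ with $\sum_k\psi_k A\,\partial_k f$ rearranged as $B\sum_j\psi_j\partial_j f$. The natural choice is $A=\psi_3\bar r\,\overline{\psi_3}$, and Lemma \ref{lemmaeq1} a) should be used to move $\psi_k$ past $A$. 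Since $T_3$ is orthogonal, this gives $B=\bar r$. In the case $c\neq 0$ the rotation in \eqref{TransMob2} has the form $x\mapsto \frac{c}{|c|}x\frac{v^{-1}}{|v^{-1}|}$, and the constraint in Proposition \ref{Moebius}(2) is exactly what rewrites it in this form with $r=c/|c|$.

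For the inversion $T_4(x)=x^{-1}$ on ${}^{\psi}\cA$, I expect the Kelvin-type identity
\begin{align*}
{}^{\psi}D\Big[\frac{\overline{\psi_3}\,\bar x\,\psi_3}{|x|^3}\,f(x^{-1})\Big]=-\frac{\overline{\psi_3}\, x\,\psi_3}{|x|^5}\,({}^{\psi}Df)(x^{-1}),
\end{align*}
up to the exact placement of the $\psi_3$-conjugations, which I will pin down by computation. To verify it, I will compute $\partial_k(x^{-1})$ and separately ${}^{\psi}D$ of the weight, using that $\bar x/|x|^3$ is $\psi$-hyperholomorphic up to the $\psi_3$-twist. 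The terms in which the derivative falls on $f$ are then collapsed via Lemma \ref{lemmaeq1} b), namely $\sum_i\psi_i q\psi_i=-\bar q+2\langle q,\psi_3\rangle\psi_3$. The $\psi_3$ correction term has to cancel because $x\in{}^{\psi}\cA$ has no $\psi_3$-component.

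The main obstacle is this inversion step. ${}^{\psi}\cA$ is not closed under multiplication, and ${}^{\psi}D$ contains no $\psi_3$, so the usual Clifford Kelvin inversion does not transfer verbatim. The $\psi_3$-conjugations in $A_T$ appear precisely to absorb the extra term in Lemma \ref{lemmaeq1} b). Once the four elementary laws are in hand, I will assemble the case $c=0$ as $T_1\circ T_3\circ T_2$, which produces $A_T=\psi_3\frac{\bar a}{|a|}\overline{\psi_3}$ and $B_T=|a||d^{-1}|\frac{\bar a}{|a|}=|d^{-1}|\bar a$. For $c\neq 0$ the chain is $T_1\circ T_4\circ S_1\circ T_3\circ T_2$. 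Evaluating the inversion weights at $S_1\circ T_3\circ T_2(x)=cxv^{-1}+dv^{-1}$ and multiplying by the rotation and dilation constants should yield the displayed $A_T$ and $B_T$. The factor $|v^{-1}|$ combines with $|c|\cdot\frac{\bar c}{|c|}$, and I will recheck the powers $3$ and $5$ against this bookkeeping.
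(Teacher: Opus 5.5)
Your architecture is the paper's: covariance laws for the elementary maps, chained along \eqref{TransMob1} and \eqref{TransMob2}. Your explicit composition rule is correct, and it supplies the step the paper only asserts in its last sentence. Your rotation weights $A=\psi_3\bar r\,\overline{\psi_3}$, $B=\bar r$ and your $c=0$ bookkeeping are also right.

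\textbf{The inversion step, as you formulate it, fails.} Since $\psi_3$ anticommutes with $\psi_1,\psi_2$, conjugation by $\psi_3$ acts on ${}^{\psi}\cA$ as quaternionic conjugation. Hence $\overline{\psi_3}\,\bar x\,\psi_3=\psi_3\,\bar x\,\overline{\psi_3}=x$, and your proposed weight is just $x/|x|^3$. This weight is not $\psi$-hyperholomorphic: ${}^{\psi}D[x|x|^{-3}]=-|x|^{-3}-3x^2|x|^{-5}\neq 0$. So your identity already fails for constant $f$. Even formally, inserting it into your chain would put $w/|w|^3$ into $A_T$ instead of the stated $\overline{w}/|w|^3$, where $w=cxv^{-1}+dv^{-1}$.

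Your diagnosis of the obstacle is also off. The Clifford Kelvin inversion does transfer verbatim, because ${}^{\psi}\cA$ is exactly the paravector space of the Clifford algebra generated by $\psi_1,\psi_2$, and paravectors are not closed under multiplication in that setting either. The $\psi_3$-conjugated factor $\psi_3\frac{\bar c}{|c|}\overline{\psi_3}$ in $A_T$ comes entirely from the rotation $T_3$ with $r=c/|c|$, not from the inversion.

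The correct law, which is what the paper proves, is the untwisted one:
\[
{}^{\psi}D\Big[\frac{\bar x}{|x|^3}\,f(x^{-1})\Big]=-\frac{x}{|x|^5}\,({}^{\psi}Df)(x^{-1}).
\]
To check it, first note that ${}^{\psi}D[\bar x|x|^{-3}]=3|x|^{-3}-3x\bar x|x|^{-5}=0$. Next, with $y=x^{-1}$ one has $\sum_k\partial_k y_j\,\psi_k=-x\psi_jx/|x|^4$, which follows from $x\bar u x=2\langle x,u\rangle x-|x|^2u$. So the coefficient of $\partial_jf$ is $-x\psi_jx\bar x/|x|^7=-x\psi_j/|x|^5$, and Lemma \ref{lemmaeq1} b) is not needed at all. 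With these weights, your chain produces exactly the stated $A_T$ and $B_T$.

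For the rotation, rely on your orthogonality argument, where the coefficient of $\partial_j f$ is $T_3^{-1}(\psi_j)A=\bar r\psi_j$. Do not use Lemma \ref{lemmaeq1} a) as printed: for $j=1,2$ the correct identity is $\overline{\psi_3}\,\overline{\psi_j}\,\overline{\psi_3}=-\psi_j$, not $-\overline{\psi_j}$. The paper's own computation silently uses the correct version.
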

\begin{proof}
The basic Möbius transformations satisfy the following: 

\noindent
Translation.  If $y= T(x) = x + u$, where $u\in{}^{\psi}\cA$, then 
\begin{align*}
{}^{\psi}\!D [f\circ T](x)   
= &       \sum_{k=0}^{2}\psi_k \sum_{j=0}^{2}\dfrac{\partial f}{\partial y_j} (T(x) )\dfrac{\partial y_j}{\partial x_k} (x)  \\
= &     \sum_{j=0}^{2} \psi_j\dfrac{\partial f}{\partial y_j} (T(x) ) = {}^{\psi}D [f]\circ T(x)  ,  
 \end{align*}
where $y_j = x_j + u_j$ for  $j=0,1,2$.

\noindent
 Dilation. If $y= T(x)  =  \lambda x$, where $\lambda >0$, then
\begin{align*}
{}^{\psi}\!D [f\circ T](x)   
= &          \lambda \sum_{j=0}^{2} \psi_j\dfrac{\partial f}{\partial y_j} (T(x) ) =   \lambda  {}^{\psi}\!D [f]\circ T(x),  
\end{align*}
where $y_j = \lambda x_j $ for  $j=0,1,2$.

\noindent
 Rotation.  $y  =  T(x)  =  rx \psi_3 \overline{r} \, \overline{\psi_3}$, where 
$r\in \mathbb H$ and $|r|=1$. Note that 
\[
y_j  =  \langle   rx\psi_3 \overline{r} \, \overline{\psi_3} , \psi_j    \rangle    =     \dfrac{1}{2}(\overline{rx\psi_3 \overline{r} \, \overline{\psi_3}}\psi_j + \overline{\psi}_j rx\psi_3 \overline{r} \, \overline{\psi_3})
\] 
{ {
Since  ${{x= x_0 \psi_0 +x_1 \psi_1 +x_2\psi_2 \in  {}^{\psi}\cA }}$, we have 
\[\dfrac{\partial y_j}{\partial x_i} = \dfrac{1}{2} \left( \overline{r\psi_i\psi_3 \overline{r} \overline{\psi_3}} \psi_j +\overline{\psi_j}r\psi_i\psi_3 \overline{r} \overline{\psi_3}\right)\]
and 
\begin{align*}
 \psi_i \psi_3 \overline{r} \overline{\psi_3} \dfrac{\partial y_j}{\partial x_i} 
&=\dfrac{1}{2}  \psi_i \psi_3 \overline{r} \overline{\psi_3}
\left( \psi_3 r \overline{\psi_3}\, \overline{\psi_i}\overline{r} \psi_j +\overline{\psi_j}r\psi_i\psi_3 \overline{r} \overline{\psi_3}\right)\\ 
&=\dfrac{1}{2} \left(\overline{r} \psi_j + \psi_i \big(\psi_3 \overline{r} \overline{\psi_3} \,\overline{\psi_j}r\big)\psi_i\psi_3 \overline{r} \overline{\psi_3}\right).
\end{align*}
}}
Thus
\begin{align*}
& 
{}^{\psi}D  [\psi_3 \overline{r} \, \overline{\psi_3} f\circ T] (x)   \\ 
=&    \sum_{i=0}^{2}  \psi_i \dfrac{\partial}{\partial x_i}  (\psi_3 \overline{r} \, \overline{\psi_3}  f\circ T ) (x)  
=    \sum_{i=0}^{2}  \psi_i \psi_3 \overline{r} \, \overline{\psi_3} \dfrac{\partial}{\partial x_i}    (f\circ T)  (x)
\\
=&     \sum_{i=0}^{2} \psi_i \psi_3 \overline{r} \, \overline{\psi_3}  \sum_{j=0}^{2} \dfrac{\partial f}{\partial y_j} (T(x)) \dfrac{\partial y_j}{\partial x_i} (x)\\
=&  \dfrac{1}{2} \sum_{i=0}^{2} \sum_{j=0}^{2} (\overline{r}\psi_j  +  \psi_i \psi_3 \overline{r}\,{ {\overline{\psi_3}\, \overline{\psi_j}}} r\psi_i \psi_3\overline{r}\, \overline{\psi_3})  \dfrac{\partial f}{\partial y_j} (T(x))
\\
=&  {{ \dfrac{3}{2} \overline{r}{}\,^{\psi}\!D [f]  (T(x)) }}  +   \dfrac{1}{2} \sum_{j=0}^2 \left[    \sum_{i=0}^{2} \psi_i (\psi_3 \overline{r}\,{ {\overline{\psi_3}\,\overline{\psi_j} }}r)\psi_i \right] \psi_3 \overline{r}\, \overline{\psi_3}    \dfrac{\partial f}{\partial y_j}  (T(x)).
\end{align*}
 
 Denote $q = \psi_3 \overline{r}\, { {\overline{\psi_3} \, \overline{\psi_j}}} r$ then  by Lemma \ref{lemmaeq1} (b)
\begin{align*}
\sum_{i=0}^2 \psi_i q \psi_i&=   
 -\overline{r}\psi_j \psi_3 r \overline{\psi_3}+\left(\overline{r}\psi_j \psi_3 r \overline{\psi_3}\psi_3  + \overline{\psi_3}\psi_3 \overline{r} \overline{\psi_3} \,\overline{\psi_j}r\right)\psi_3
\\ 
& = -\overline{r}\psi_j \psi_3 r \overline{\psi_3}+\overline{r}\psi_j \psi_3 r \psi_3  + \overline{r} \overline{\psi_3} \,\overline{\psi_j}r\psi_3
 \end{align*}
and by \eqref{eq1}
\begin{align*}
\sum_{i=0}^2 \left( \psi_i q\psi_i\right)\left(\psi_3 \overline{r} \overline{\psi_3}\right) &=
-\overline{r}\psi_j \psi_3 r \overline{\psi_3}\psi_3 \overline{r} \overline{\psi_3} +\overline{r}\psi_j \psi_3 r \psi_3\psi_3 \overline{r} \overline{\psi_3}  + \overline{r} \overline{\psi_3} \,\overline{\psi_j}r\psi_3
 \psi_3 \overline{r} \overline{\psi_3}\\ 
&= -\overline{r}\psi_j -\overline{r}\psi_j -\overline{r} \overline{\psi_3} \,\overline{\psi_j} \,\overline{\psi_3}.
\end{align*}

Therefore by Lemma \ref{lemmaeq1} (a)

\begin{align*}
 \dfrac{1}{2}\sum_{j=0}^2 \sum_{i=0}^2   \left( \psi_i q\psi_i\right)\left(\psi_3 \overline{r} \overline{\psi_3}\right)
\dfrac{\partial f}{\partial y_j}(T(x))&= \dfrac{1}{2} \sum_{j=0}^2 \left( -\overline{r}\psi_j -\overline{r}\psi_j -\overline{r} \overline{\psi_3} \,\overline{\psi_j} \,\overline{\psi_3}\right) \dfrac{\partial f}{\partial y_j}(T(x))\\ \\
&= -\overline{r} \sum_{j=0}^2 \psi_j  \dfrac{\partial f}{\partial y_j}(T(x))
+ \dfrac{\overline{r}}{2} \sum_{j=0}^2  \psi_j \dfrac{\partial f}{\partial y_j}(T(x))\\ \\
&=
-\dfrac{\overline{r}}{2}
\sum_{j=0}^2 \psi_j {{\dfrac{\partial f}{\partial y_j}}}[f](T(x)) \\ \\
&= -\dfrac{\overline{r}}{2} {{{}^{\psi}\!D [f]  (T(x))}} 
\end{align*}
and
\begin{align*}
&
{}^{\psi}\!D  [\psi_3 \overline{r} \, \overline{ \psi_3} f\circ T] (x)    
=   \overline{r} 
{}^{\psi}\!D  [f] (T(x)).   
  \end{align*}

\noindent

Inversion $y= T(x) = x^{-1}$, where $x\neq 0$.  
\begin{align*}
&{}^{\psi}D  \left[ \dfrac{\overline{x}}{|x|^3} f\circ T \right]  (x)\\
   & =  \sum_{k=0}^{2} \psi_k \left[  \left( \dfrac{\partial}{\partial x_k} \dfrac{\overline{x}}{|x|^3} \right) (f\circ T)(x)     +     \dfrac{\overline{x}}{|x|^3} \dfrac{\partial}{\partial x_k} (f\circ T)(x)   \right] \\
=& \left[  \dfrac{ |x|^3}{|x|^{6}} +  \sum_{k=1}^{2}   \dfrac{|x|^3}{|x|^{6}}  -   \sum_{k=0}^{2} \psi_k \dfrac{3x_k |x| \overline{x}}{|x|^{6}} \right](f \circ T)(x)
 +    \sum_{k=0}^{2} \psi_k \dfrac{\overline{x}}{|x|^3} \dfrac{\partial }{\partial x_k} (f\circ T) (x)
\\
=& \left[ \dfrac{3}{|x|^3}    -    \dfrac{3 |x| \overline{x}x}{|x|^6}   \right] (f \circ T)(x)    +         \sum_{k=0}^{2} \psi_k \dfrac{\overline{x}}{|x|^3} \dfrac{\partial }{\partial x_k} (f\circ T) (x)
\\
=&   \sum_{k=0}^{2} \psi_k \dfrac{\overline{x}}{|x|^3} \sum_{j=0}^{2} \dfrac{\partial f}{\partial y_j} (T(x)) \dfrac{\partial y_j}{\partial x_k} (x)
\end{align*}
In order to calculate the right hand side, we see that 
\[
y_j  =  \langle  \dfrac{\overline{x}}{|x|^2} , \psi_j  \rangle    =    \dfrac{1}{2} \left(    \dfrac{x}{|x|^2}\psi_j  +  \overline{\psi_j} \dfrac{\overline{x}}{|x|^2}   \right) .
\]
Therefore, 
\begin{align*}
&{}^{\psi}D  \left[ \dfrac{\overline{x}}{|x|^3} f\circ T \right]  (x) =    \sum_{j=0}^{2} \sum_{k=0}^{2}  \psi_k \dfrac{\overline{x}}{|x|^3}  \dfrac{\partial y_j}{\partial x_k} (x)   \dfrac{\partial f}{\partial y_j} (T(x))
\\  
=&  \sum_{j=0}^2   \left(     \psi_0 \delta_{j,0}  - \sum_{k=1}^2 \psi_k \delta_{j,k} \right) \dfrac{\overline{x}}{|x|^5} \dfrac{\partial f}{\partial y_j} (T(x)) 
 \\
 & - \sum_{j=0}^2  \left(    \sum_{k=0}^2 \psi_k x_k  \right) \dfrac{2\langle \overline{x} , 
 \psi_j \rangle \overline{x} }{|x|^7}\dfrac{\partial f}{\partial y_j} (T(x)) 
\\  
=&  \sum_{j=0}^2 \overline{\psi_j} \dfrac{\overline{x}}{|x|^5} \dfrac{\partial f}{\partial y_j} (T(x))  -   \sum_{j=0}^2 \dfrac{2\langle \overline{x} , \psi_j \rangle  }{|x|^5} \dfrac{\partial f}{\partial y_j} (T(x))
\\  
=& \sum_{j=0}^2 \left( \dfrac{\overline{\psi_j} \overline{x} - 2\langle \overline{x} , \psi_j \rangle  }{|x|^5} \right) \dfrac{\partial f}{\partial y_j} (T(x))
\end{align*}
As $  -x\psi_j  =  \overline{\psi_j} \overline{x} - 2 \langle \overline{x} , \psi_j \rangle $ for $j=0,1,2$, then 
\begin{align*}
{}^{\psi}D \left[ \dfrac{\overline{x}}{|x|^3} f\circ T \right]  (x)
=      \sum_{j=0}^2 \left( \dfrac{ -x\psi_j }{|x|^5} \right) \dfrac{\partial f}{\partial y_j} (T(x))   
=  -\dfrac{x}{|x|^5} \sum_{j=0}^2 \psi_j \dfrac{\partial f}{\partial y_j} (T(x)).
\end{align*}

\noindent
The expressions for $A_T$ and $B_T$ in the general case are obtained
from the preceding identities by using  the decompositions
\eqref{TransMob1} and \eqref{TransMob2} when $c=0$ and $c\neq0$,
respectively.
\end{proof}

\begin{remark}
The previous proposition complements the results presented in
\cite{GLS1,GLS2,GLS3}, {{where}} the conformal covariance properties of
the Fueter and Moisil--Theodorescu operators are used to establish
{{isometric isomorphisms operators}} between quaternionic Bergman modules.
\end{remark}
\begin{example} \normalfont  Verify the statement of the Proposition \ref{conformalproperyofD} for  the identity $f(x) = x_0\psi_0 +x_1\psi_1+x_2\psi_2 $  in ${}^{\psi}\cA$ and the inversion $T(x)= \dfrac{1}{x}=x^{-1}$.\end{example}
For 
$T(x)=x^{-1}$,
we have 
$
a=0,\ b=1,\ c=1,\ d=0
$. 
Hence
$
v=b-ac^{-1}d=1
$
and consequently
$
v^{-1}=1.
$
The expression for $A_T$ becomes
\[
A_T(x)
=
\psi_3\frac{\overline c}{|c|}\overline{\psi_3}
\frac{\overline{cxv^{-1}+dv^{-1}}}
{|cxv^{-1}+dv^{-1}|^3}
=
\frac{\overline x}{|x|^3}
\]

and  the expression for for $B_T$
is 
\[
B_T(x)
=
-|v^{-1}|\overline c
\frac{cxv^{-1}+dv^{-1}}
{|cxv^{-1}+dv^{-1}|^5}
=
-\frac{x}{|x|^5}.
\]
Thus
\[
{}^\psi\! D[f](x)
=
\sum_{j=0}^{2}\psi_j
\frac{\partial x}{\partial x_j}
=
\sum_{j=0}^{2}\psi_j^2=1-1-1=-1
\]
and 
\[B_T(x){}^\psi\! D[f](x)]= -\frac{x}{|x|^5}(-1)= \frac{x}{|x|^5}.\]

We now compute the left-hand side. Since
\[
f(T(x))=x^{-1}=\frac{\overline x}{|x|^2},
\]
we have
\[
A_T(x)f(T(x))
=
\frac{\overline x^{\,2}}{|x|^5}.
\]

By the product rule,
\begin{align*}
{}^\psi D\left[\frac{\overline{x}^{\,2}}{|x|^5}\right]
&=
\sum_{k=0}^2
\psi_k
\frac{\partial}{\partial x_k}
\left(\overline{x}^{\,2}|x|^{-5}\right)
\\
&=
\frac{1}{|x|^5}
\sum_{k=0}^2
\psi_k
\frac{\partial\overline{x}^{\,2}}{\partial x_k}
+
\sum_{k=0}^2
\psi_k\overline{x}^{\,2}
\frac{\partial}{\partial x_k}\frac{1}{|x|^5}\\
&= 
\frac{1}{|x|^5}
\sum_{k=0}^2
\psi_k
(\overline{\psi_k}\,\overline{x}
+
\overline{x}\,\overline{\psi_k})
-\frac{5}{|x|^7}
\sum_{k=0}^2
x_k\psi_k\overline{x}^{\,2}\\
&=
\frac{1}{|x|^5}
\left[
\sum_{k=0}^2
\psi_k\overline{\psi_k}\,\overline{x}
+
\sum_{k=0}^2
\psi_k\overline{x}\,\overline{\psi_k}
\right]
-
\frac{5x\overline{x}^{\,2}}{|x|^7}.
\end{align*}

We have 
\[
\sum_{k=0}^2
\psi_k\overline{\psi_k}\,\overline{x}
=
3\overline{x}.
\]
and 
\begin{align*}
\sum_{k=0}^2
\psi_k\overline{x}\,\overline{\psi_k}
&=
\overline{x}
-\psi_1\overline{x}\psi_1
-\psi_2\overline{x}\psi_2.
\end{align*}
By Lemma~\ref{lemmaeq1}, applied to
$q=\overline{x}\in{}^\psi\cA$, we have
\[
\sum_{k=0}^2
\psi_k\overline{x}\psi_k
=
-\overline{\overline{x}}
=
-x.
\]
Thus,
\[
\overline{x}
+\psi_1\overline{x}\psi_1
+\psi_2\overline{x}\psi_2
=
-x,
\]
which implies
\[
\psi_1\overline{x}\psi_1
+
\psi_2\overline{x}\psi_2
=
-x-\overline{x}.
\]
Consequently,
\begin{align*}
\sum_{k=0}^2
\psi_k\overline{x}\,\overline{\psi_k}=
\overline{x}
-\left(-x-\overline{x}\right)
=
x+2\overline{x}.
\end{align*}
Substituting the preceding identities, we obtain
\begin{align*}
{}^\psi D\left[\frac{\overline{x}^{\,2}}{|x|^5}\right]
&=
\frac{3\overline{x}+x+2\overline{x}}{|x|^5}
-
\frac{5x\overline{x}^{\,2}}{|x|^7}
=
\frac{x+5\overline{x}}{|x|^5}
-
\frac{5x\overline{x}^{\,2}}{|x|^7}=
\frac{x}{|x|^5}.
\end{align*}

\begin{corollary}\label{corollaryconformalproperyofD}
Let $\Omega, \Xi \subset \cA \cong \mathbb R^3$ be two conformal equivalent domains and let $T$ be a Möbius transformation given by \eqref{MoebiusR3} such that $T(\Omega) =\Xi$. Then $f\in {}^{\psi}\!\mathcal M^1(\Xi)$ if and only if $A_{T} f\circ T \in {}^{\psi}\!\mathcal M^1(\Omega)$; or equivalently, $f\in {}^{\psi}\!\mathcal M^1(\Xi)$ if and only if $f\circ T \in {}^{\psi}\!\mathcal M^{A_{T} }(\Omega)$. 
\end{corollary}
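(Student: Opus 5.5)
The plan is to read the corollary directly off the covariance identity \eqref{invarianCauchy} of Proposition \ref{conformalproperyofD}, after unpacking the definitions of the modules. By definition, ${}^{\psi}\!\mathcal M^1(\Xi)$ is the set of $f\in C^1(\Xi,\mathbb H)$ with ${}^{\psi}\!D[f]=0$ on $\Xi$. Likewise, $g\in{}^{\psi}\!\mathcal M^{A_T}(\Omega)$ means ${}^{\psi}\!D[A_T g]=0$ on $\Omega$. Taking $g=f\circ T$ shows that the second formulation, $f\circ T\in{}^{\psi}\!\mathcal M^{A_T}(\Omega)$, is literally the same condition as ${}^{\psi}\!D[A_T\, f\circ T]=0$, that is, $A_T\, f\circ T\in{}^{\psi}\!\mathcal M^1(\Omega)$. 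This is the observation ${}^h M$ recalled after the definition of the modules. So only the first equivalence needs an argument.

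For the first equivalence I would apply \eqref{invarianCauchy}:
\[
{}^{\psi}\!D[A_T\, f\circ T](x)=B_T(x)\,\big({}^{\psi}\!D[f]\big)(T(x)),\qquad x\in\Omega .
\]
If ${}^{\psi}\!D[f]=0$ on $\Xi$, the right-hand side vanishes, since $T(\Omega)=\Xi$. Hence $A_T\, f\circ T$ is $\psi$-hyperholomorphic on $\Omega$.

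Conversely, suppose the left-hand side vanishes on $\Omega$. Then $B_T(x)\,{}^{\psi}\!D[f](T(x))=0$ for every $x\in\Omega$. I then need $B_T(x)\neq0$ so that I can multiply on the left by $B_T(x)^{-1}$; this uses the fact that $\mathbb H$ is a division algebra. Since $T$ maps $\Omega$ onto $\Xi$, it would follow that ${}^{\psi}\!D[f]=0$ on $\Xi$.

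The only genuine point to check is that $B_T$ does not vanish on $\Omega$, together with the implicit requirement that $T$ be finite and $C^1$ on $\Omega$. I would handle this by cases.
\begin{itemize}
\item If $c=0$, then $B_T=|d^{-1}|\,\overline a$, and $a\neq0$ is forced by the normalization $\overline a/|a|$ in Proposition \ref{Moebius}.
\item If $c\neq0$, then $v\neq0$ (otherwise $T$ would be constant and $v^{-1}$ undefined). In addition $cxv^{-1}+dv^{-1}=(cx+d)v^{-1}$ vanishes only at $x=-c^{-1}d$, and $T$ sends that point to $\infty$. Since $T(\Omega)=\Xi\subset{}^{\psi}\cA$ is a bounded domain in $\mathbb R^3$, this point is not in $\Omega$. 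Hence $B_T(x)=-|v^{-1}|\,\overline c\,(cx+d)v^{-1}/|(cx+d)v^{-1}|^5\neq0$ on $\Omega$.
\end{itemize}
The same observation shows that $A_T$ is smooth on $\Omega$. So $A_T\, f\circ T\in C^1(\Omega,\mathbb H)$ whenever $f\in C^1(\Xi,\mathbb H)$, and membership in ${}^{\psi}\!\mathcal M^1(\Omega)$ is meaningful.

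I expect this nonvanishing and regularity bookkeeping to be the only obstacle. The rest is a direct substitution into Proposition \ref{conformalproperyofD}.
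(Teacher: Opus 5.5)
Your proposal is correct and follows the paper's intended route. The paper gives no separate proof and treats the corollary as an immediate consequence of the covariance identity \eqref{invarianCauchy}, together with the remark that $f\in{}^{\psi}\!\mathcal M^h(\Omega)$ if and only if ${}^hM[f]$ is $\psi$-hyperholomorphic; your explicit check that $B_T$ is nonvanishing and that $A_T$ is smooth on $\Omega$ supplies a step the paper leaves tacit, which the converse direction needs.
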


\begin{proposition}\label{inverOperta}
Let $\Omega, \Xi \subset \cA \cong \mathbb R^3$ be   two conformal equivalent domains, and  let $T$ be  a Möbius transformation given by \eqref{MoebiusR3} such that $T(\Omega) =\Xi$. Then 
\begin{align*}
A_T  (x)A_{T^{-1}} (T(x)) =  & A_{T^{-1}} (T(x)) A_T  (x) =1, \quad \mbox{for every $ x\in \Omega$}. 
\end{align*} 
\end{proposition}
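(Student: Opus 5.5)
The plan is to reduce the statement to the elementary Möbius transformations by means of a cocycle (chain) rule for the weights $A_T$. I would then check the identity directly on each of the four basic maps $T_1,\dots,T_4$.

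\textbf{Step 1 (cocycle rule).} First I would isolate the rule for $A$ that the proof of Proposition \ref{conformalproperyofD} actually uses when it assembles $A_T$ from the decompositions \eqref{TransMob1} and \eqref{TransMob2}. If $S$ and $T$ are Möbius transformations, then applying \eqref{invarianCauchy} twice gives
\[
{}^{\psi}\!D\big[A_T\,(A_S\, g\circ S)\circ T\big]=B_T\,\big({}^{\psi}\!D[A_S\, g\circ S]\big)\circ T=B_T\,(B_S\circ T)\,{}^{\psi}\!D[g]\circ S\circ T .
\]
So the weight attached to $S\circ T$ is the ordered product
\[
A_{S\circ T}(x)=A_T(x)\,A_S(T(x)).
\]
I would check that the closed formulas for $A_T$ in Proposition \ref{conformalproperyofD} are exactly these ordered products along \eqref{TransMob1} and \eqref{TransMob2}. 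That is, for $T=T^{(1)}\circ\cdots\circ T^{(m)}$ one has
\[
A_T(x)=A_{T^{(m)}}(x)\,A_{T^{(m-1)}}\big(T^{(m)}(x)\big)\cdots A_{T^{(1)}}\big(T^{(2)}\circ\cdots\circ T^{(m)}(x)\big).
\]

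\textbf{Step 2 (elementary maps).} Next I would verify the identity for each elementary map.
\begin{itemize}
\item For a translation or a dilation the weight is the constant $1$. For a dilation with $a=\lambda$ this is $\psi_3\overline{\psi_3}=1$.
\item For the rotation with parameter $r$ the weight is $\psi_3\overline r\,\overline{\psi_3}$. The inverse rotation has parameter $\overline r$, so its weight is $\psi_3 r\overline{\psi_3}$, and the product of the two is $1$.
\item The inversion is its own inverse with weight $\overline{x}/|x|^3$. Evaluated at $x^{-1}$ this weight equals $x|x|$, and
\[
\frac{\overline x}{|x|^3}\,x|x|=1=x|x|\,\frac{\overline x}{|x|^3}.
\]
\end{itemize}

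\textbf{Step 3 (telescoping).} Write $T=T^{(1)}\circ\cdots\circ T^{(m)}$, so that $T^{-1}=(T^{(m)})^{-1}\circ\cdots\circ(T^{(1)})^{-1}$. The cocycle product for $A_T(x)A_{T^{-1}}(T(x))$ then has the innermost factors $A_{T^{(1)}}(\cdot)$ and $A_{(T^{(1)})^{-1}}(T^{(1)}(\cdot))$ adjacent, evaluated at matching points. By Step 2 they cancel to $1$, and inductively the whole product collapses to $1$.

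The reversed product $A_{T^{-1}}(T(x))A_T(x)$ needs no separate argument: in a division ring, $uv=1$ implies $vu=1$. Alternatively, one can run the same telescoping in the other order.

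\textbf{Main obstacle.} The hardest point is well-definedness. $A_{T^{-1}}$ is defined from the coefficients of some representation \eqref{MoebiusR3} of $T^{-1}$, for instance the one coming from the inverse of the matrix of $T$. I need this to agree with the cocycle product along the reversed decomposition, and representations are only unique up to a real scalar factor $\mu$. Under $(a,b,c,d)\mapsto(\mu a,\mu b,\mu c,\mu d)$, each quotient $\overline c/|c|$, $\overline a/|a|$ changes only by the sign of $\mu$. The factor $|v^{-1}|$ and the $|\cdot|^3$ normalization also compensate. I would therefore check this invariance directly, and I would fix the sign convention by requiring the identity to hold, which Step 2 shows is consistent.
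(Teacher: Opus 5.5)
Your Step 2 matches the paper's proof: the paper checks the four elementary maps and then asserts that the identity ``consequently'' holds for every M\"obius transformation. The gap, in your proposal and in the paper alike, is that extension to compositions, and your ``main obstacle'' paragraph misdiagnoses it. Step 1 shows only that the ordered product $A_T(x)A_S(T(x))$ is \emph{a} weight satisfying \eqref{invarianCauchy} for $S\circ T$. That identity does not determine the weight: ordered products along different decompositions of the same map can differ by a positive constant. For example, with $\iota(x)=x^{-1}$ and $\delta_\lambda(x)=\lambda x$ one has $\iota\circ\delta_\lambda=\delta_{1/\lambda}\circ\iota$, yet the two ordered products are $\overline{x}/(\lambda^{2}|x|^{3})$ and $\overline{x}/|x|^{3}$. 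So your displayed formula for $A_T$ holds only along the particular decompositions \eqref{TransMob1} and \eqref{TransMob2}. In Step 3, the reversed decomposition of $T$ is in general not the one that produces the closed form of $A_{T^{-1}}$. The real-scalar ambiguity of $(a,b,c,d)$ that you analyse only ever produces a sign, so it cannot account for or absorb a positive factor.

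In fact the identity fails, so no argument along these lines can close the gap. Take $T(x)=(\lambda x)^{-1}$ with $\lambda>0$, i.e.\ $a=0$, $b=1$, $c=\lambda$, $d=0$. It satisfies the conditions of Proposition \ref{Moebius} with $v=1$, and Proposition \ref{conformalproperyofD} gives $A_T(x)=\overline{x}/(\lambda^{2}|x|^{3})$. Since $T^{-1}=T$, we get $A_{T^{-1}}(T(x))=x|x|$, hence $A_T(x)A_{T^{-1}}(T(x))=\lambda^{-2}$. Because $T$ is an involution, changing the sign of its representation flips both factors and has no effect. The paper's own map $T(x)=(-x+b)(-\overline{b}x+1)^{-1}$ behaves the same way: the product equals $(1-|b|^{2})^{2}/|b|^{4}$; for $b=1/2$, $A_T(0)A_T(1/2)=\tfrac94\cdot 4=9$. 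What your telescoping correctly proves is $A_T(x)\widetilde{A}(T(x))=1$, where $\widetilde{A}$ is the ordered product along the reversed decomposition of $T$. With the closed-form weights, $A_T(x)A_{T^{-1}}(T(x))$ satisfies \eqref{invarianCauchy} for the identity map, and testing with $f\equiv 1$ and $f=y_j$ forces it to be a real constant, but not necessarily $1$. So the proposition, and the inverse formula in Corollary \ref{cor12}, hold only if $A_{T^{-1}}$ is taken to be $\widetilde{A}$, or after dividing by that constant.
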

\begin{proof}
{{The identities follow by considering}} the four basic M\"obius transformations. In particular, the proof of these identities for dilations and  translations are {{immediate}}. {{For a rotation}}
  $T(x) = r x \psi_3 \overline{r} \, \overline{\psi_3}$ we have $T^{-1}(y) = \bar r y \psi_3 r \overline{\psi_3}$ {{and}}
\begin{align*}
A_T  (x) = \psi_3 \overline{r} \, \overline{\psi_3}, & \quad A_{T^-1}  (y) = \psi_3   r  \overline{\psi_3}.
\end{align*}
If $T(x) =  x^{-1}$ then  $T^{-1}(y) =   y ^{-1}$,  by 
\begin{align*}
A_T  (x) = \frac{{{\overline{x}}}}{|x|^3} , & \quad A_{T^-1}  (y) = \frac{{{\overline{y}}}}{|y|^3}.  
\end{align*}
{{Therefore,
\begin{align*}
A_{T^{-1}}(T(x))
&=
\frac{\overline{x^{-1}}}{|x^{-1}|^3}
=
\frac{x/|x|^2}{1/|x|^3}
=
|x|x.
\end{align*}
It follows that
\begin{align*}
A_T(x)A_{T^{-1}}(T(x))
&=
\frac{\overline{x}}{|x|^3}|x|x
=
\frac{\overline{x}x}{|x|^2}
=1.
\end{align*}
Likewise,
\begin{align*}
A_{T^{-1}}(T(x))A_T(x)
&=
|x|x\frac{\overline{x}}{|x|^3}
=
\frac{x\overline{x}}{|x|^2}
=1.
\end{align*}
}}
Thus, the identities hold for each of the four basic M\"obius
transformations and, consequently, for every M\"obius transformation
on ${}^{\psi}\!\cA$.

\end{proof}

\subsection{Weighted quaternionic $\cq$-spaces}

From now on we consider $b\in \mathbb B(0,1)$ and 
$$T (x) = (- x +b)(- \bar b x +1  )^{-1}, \quad \mbox{ {for all}}\quad  x\in \mathbb B(0,1).$$
\begin{definition}
From Proposition \ref{conformalproperyofD} we see that 
\begin{align*}
    A_T (x)  & =  -\psi_3 \dfrac{b}{|b|}\overline{\psi_3} \dfrac{\overline{-\bar b x(b-(\bar b)^{-1})^{-1} + (b-(\bar b)^{-1})^{-1}}}{|-\bar bx(b-(\bar b)^{-1})^{-1} + (b-(\bar  b)^{-1})^{-1}|^3} ,
    \\
    B_T (x)    &= 2|b-(\bar b)^{-1}| b\dfrac{-\bar b x(b-(\bar b)^{-1} )^{-1} +  (b-(\bar b)^{-1} )^{-1} }{|-\bar bx(b-(\bar b)^{-1} )^{-1}  + (b-(\bar b)^{-1})^{-1}|^5}
   , \end{align*}
for all $x\in \mathbb B(0,1) $ and define 
\begin{align*}
h_T(x): =  & 1- 2\textrm{Sc}A_T(x) \cdot (\overline{A_T (x)})^{-1}, \\
 E_T(x): = & (B_T(x) )^{-1} \cdot \left(\frac{1-|T(x)|^2  } {1-|x|^2}\right)^{\frac{3}{2}}, \\
{{M_{T,a}(x)}}:=& \frac{  g^p(T(x),a) }{  g^p(x,a)} |B_T(x) |^{{{-2}}} {{|J_T(x)|}}, \\
N_T(x):= & |B_T(x) |^{-2} {{|J_T(x)|}},
\end{align*}
for all $x\in \mathbb B(0,1)$, where $\textrm{Sc} (A_T)$ is the scalar part of  $A_T$ and {{
$|J_T(x)|$ denotes the absolute value of the determinant of the
Jacobian matrix of $T$ at $x$.}}
\end{definition}

\begin{remark}
Note that according to Proposition \ref{conformalproperyofD} the identity  
\begin{align}
  \label{invarianCauchyconj}
 {}^{\overline{\psi}}D[\overline{A_T} f\circ T] = 
 \overline{B_T} {}^{\overline{\psi}}D [f]\circ T   
\end{align}
is valid  for all $f\in C^{1}( \mathbb B(0,1),\mathbb H)$. 
\end{remark}

\begin{proposition}\label{prop1}(Conformal invariant property of some ${}^{\psi}{\mathcal{Q}}_{p,h,\rho}$ spaces)
The quaternionic right linear operator $ {}^{ \overline{A_T}}M \circ W_ T $ has the following properties:
\begin{enumerate} 
\item  $ {}^{ \overline{A_T}}M \circ W_ T : {}^{\psi}\mathcal M^1   (\mathbb B(0,1)) \to  {}^{\psi}\mathcal M^{h_T}  (\mathbb B(0,1)) $. 
\item  $ {}^{ \overline{A_T}}M \circ W_ T \mid_{  {}^{\psi}\mathcal B^1_1 } {}^{\psi}\mathcal B^1_1 \to   {}^{\psi}\mathcal B^{h_T}_{E_T}   $. 
\item $ {}^{ \overline{A_T}}M \circ W_ T  \mid_ {{}^{\psi}\mathcal Q^1_{p,1} }: {}^{\psi}\mathcal Q^1_{p,1} \to   {}^{\psi}\mathcal Q^{h_T}_{p, M_T}   $. 
\item $ {}^{ \overline{A_T}}M \circ W_ T \mid _{   {}^{\psi}\mathcal D^h_\rho} : {}^{\psi}\mathcal D^1_1 \to   {}^{\psi}\mathcal D^{h_T}_{N_T}   $.
\end{enumerate}
\end{proposition}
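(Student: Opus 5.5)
Write $F:={}^{\overline{A_T}}M\circ W_T[f]=\overline{A_T}\,(f\circ T)$ for $f\in{}^{\psi}\mathcal M^1(\mathbb B(0,1))$. The plan is to derive all four items from the conjugated covariance identity \eqref{invarianCauchyconj}, namely ${}^{\overline{\psi}}D[F]=\overline{B_T}\,({}^{\overline{\psi}}D f)\circ T$, together with $T(\mathbb B(0,1))=\mathbb B(0,1)$ and the change of variables $y=T(x)$, $dB_y=|J_T(x)|\,dB_x$. Right $\mathbb H$-linearity of $F$ in $f$ is clear, since both $W_T$ and left multiplication by $\overline{A_T}$ commute with right scalar multiplication.

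\textbf{Item 1.} We must show ${}^{\psi}D[h_TF]=0$. The natural route is to express $h_T\overline{A_T}$ through $A_T$. Writing $A=A_T$, one has $\overline A=2\,\mathrm{Sc}A-A$, so
\[
h_T\overline A=\overline A-2\,\mathrm{Sc}A=-A .
\]
Hence $h_TF=-A_T\,(f\circ T)$, and Proposition \ref{conformalproperyofD} gives
\[
{}^{\psi}D[h_TF]=-B_T\,({}^{\psi}Df)\circ T=0 .
\]
Here I read $(\overline{A_T})^{-1}$ so that the product $h_T\overline{A_T}$ makes sense as $\overline{A_T}-2\mathrm{Sc}A_T$. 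Checking that the order of factors in the definition of $h_T$ really produces this identity is the one delicate algebraic point. If the order does not cooperate, I would instead use Corollary \ref{corollaryconformalproperyofD} together with the inverse relation of Proposition \ref{inverOperta} to rewrite $h_T$.

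\textbf{Item 2.} Take absolute values in \eqref{invarianCauchyconj}: $|{}^{\overline\psi}DF(x)|=|B_T(x)|\,|{}^{\overline\psi}Df(T(x))|$. Multiply by $(1-|x|^2)^{3/2}$ and by the weight. With $\rho=|E_T|$, where $E_T$ is read through its modulus since the weight must be positive, we get
\[
(1-|x|^2)^{3/2}\,|{}^{\overline\psi}DF(x)|\,|E_T(x)|=(1-|T(x)|^2)^{3/2}\,|{}^{\overline\psi}Df(T(x))| .
\]
Taking the supremum over $x$, and using that $T$ is a bijection of the ball, gives ${}^{\psi}\mathcal B^{h_T}_{E_T}(F)={}^{\psi}\mathcal B^1_1(f)<\infty$.

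\textbf{Items 3 and 4.} In ${}^{\psi}\mathcal Q^1_{p,1}(f)$ substitute $y=T(x)$:
\[
\int_{\mathbb B}|{}^{\overline\psi}Df(y)|^2g^p(y,a)\,dB_y=\int_{\mathbb B}|{}^{\overline\psi}DF(x)|^2\,|B_T(x)|^{-2}\,g^p(T(x),a)\,|J_T(x)|\,dB_x .
\]
The integrand equals $|{}^{\overline\psi}DF(x)|^2\,g^p(x,a)\,M_{T,a}(x)$. Taking the supremum over $a$ yields finiteness of the weighted $\cQ_p$ seminorm of $F$ with weight $M_T$. The main obstacle here is that $M_{T,a}$ depends on $a$, so the statement must be read with this $a$-dependent weight inside the supremum. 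I would point out that, since $g(T(x),a)=g(x,T^{-1}\text{-shifted }a)$ via composition of Möbius maps of the ball, the sup over $a$ is preserved. Item 4 is the same computation without $g^p$, giving weight $N_T$ and equality of the Dirichlet integrals.
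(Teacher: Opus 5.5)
You follow the paper's own argument step for step, and item 1 is fine. Since $\mathrm{Sc}A_T$ is real there is no ordering issue: $h_T\overline{A_T}=\overline{A_T}-2\,\mathrm{Sc}A_T=-A_T$, and \eqref{invarianCauchy} finishes.

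The gap is in items 2--4. They rest entirely on \eqref{invarianCauchyconj}, and that identity is false for the maps $T$ used here. It does not follow from Proposition \ref{conformalproperyofD}: conjugating \eqref{invarianCauchy} reverses all products and only yields the right-handed statement ${}^{\overline{\psi}}D_r[\,\overline{f\circ T}\;\overline{A_T}\,]=({}^{\overline{\psi}}D_r\overline{f})\circ T\;\overline{B_T}$.

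Concretely, the formula for $A_T$ reduces to $A_T(x)=c\,\overline{u(x)}\,|u(x)|^{-3}$ with $u(x)=1-\overline{b}x$ and a constant $c>0$, so $\overline{A_T}=c\,u\,|u|^{-3}$. The computation at the origin uses three facts:
\begin{itemize}
\item $\partial_i u=-\overline{b}\psi_i$;
\item $\sum_{i=0}^2\overline{\psi_i}\,q\,\psi_i=2q+\overline{q}$ for $q\in{}^{\psi}\mathcal A$;
\item $\partial_i|u|^{-3}(0)=3b_i$, where $b=\sum_i b_i\psi_i$.
\end{itemize}
They give ${}^{\psi}D[A_T](0)=0$, as they must, but ${}^{\overline{\psi}}D[\overline{A_T}](0)=c\,(\overline{b}-b)$, which is nonzero whenever $b\notin\mathbb R$. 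Take $f\equiv 1$: the right-hand side of \eqref{invarianCauchyconj} vanishes and the left-hand side does not.

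So your seminorm equalities, e.g.\ ${}^{\psi}\mathcal B^{h_T}_{E_T}(F)={}^{\psi}\mathcal B^1_1(f)$, already fail for constant $f$. Once \eqref{invarianCauchyconj} is dropped, ${}^{\overline{\psi}}D[\overline{A_T}\,f\circ T]$ contains the zero-order term $({}^{\overline{\psi}}D\overline{A_T})(f\circ T)$ and terms in the full gradient of $f\circ T$. None of these is controlled by the seminorms of $f$, which only see ${}^{\overline{\psi}}Df$. Those seminorms vanish, for instance, on every ${}^{\psi}D$-null $f$ independent of $x_0$, such as $x_1-x_2\psi_1\psi_2$.

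The fix is to use the weight that is actually covariant for ${}^{\overline{\psi}}D$. With $\kappa(x)=\overline{x}$ one has ${}^{\overline{\psi}}D[G\circ\kappa]=({}^{\psi}DG)\circ\kappa$ and $\kappa\circ T\circ\kappa=\varphi_{\overline{b}}$. Proposition \ref{conformalproperyofD} applied to $\varphi_{\overline b}$ then yields ${}^{\overline{\psi}}D[C\,f\circ T]=B'\,({}^{\overline{\psi}}Df)\circ T$, where
\begin{itemize}
\item $C:=A_{\varphi_{\overline b}}\circ\kappa=c\,(1-x\overline{b})\,|1-x\overline{b}|^{-3}$;
\item $B':=B_{\varphi_{\overline b}}\circ\kappa$.
\end{itemize}
This $C$ coincides with $\overline{A_T}=c\,(1-\overline{b}x)\,|1-\overline{b}x|^{-3}$ only when $b$ is real. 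Since $|C|=|A_T|$ and $|B'|=|B_T|$, replace $\overline{A_T}$ by $C$ and $h_T$ by $A_TC^{-1}$. Then all four items go through by exactly your computations, with the same weights $|E_T|$, $M_{T,a}$ and $N_T$.

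Your side remark in item 3 is correct and worth keeping. Invariance of $|\varphi_a(x)|$ under automorphisms of the ball gives $g(T(x),a)=g(x,T^{-1}(a))$, so the $a$-independent weight $N_T$ already works there. This fits the definition of ${}^{\psi}\mathcal Q^h_{p,\rho}$, where $\rho$ does not depend on $a$, better than $M_{T,a}$.
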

\begin{proof} Denote $g = \overline{A_T} f\circ T$ and the proof  follows from the following identities:

\noindent
   1. {{Since
\[
2\operatorname{Sc}(A_T)=A_T+\overline{A_T},
\]
we have}}
\begin{align}\label{consecuen12}
 {}^{\psi}\! D [(1- 2\textrm{Sc}A_T \cdot (\overline{A_T})^{-1}  ) g] &=  
  {}^{\psi}\! D [(1- 2\textrm{Sc}A_T \cdot (\overline{A_T})^{-1}  ) \overline{A_T} f\circ T]
  \nonumber \\
  &=  {}^{\psi}\! D [(\overline{A_T} f\circ T- 2(\textrm{Sc}A_T) f\circ T ]   \nonumber \\ 
&  = -  {}^{\psi}\! D [  {A_T} f\circ T ] = - {B_T}{}^{\psi}\! D [   f] \circ T. 
 \end{align}
{{Thus, if
$f\in{}^{\psi}\mathcal M^1(\mathbb B(0,1))$, then
${}^{\psi}D[f]=0$, and consequently $
{}^{\psi}D[h_Tg]=0.
$
Hence
$
g\in{}^{\psi}\mathcal M^{h_T}(\mathbb B(0,1)).
$}}
 \noindent
   2.  
\begin{align*}
&{}^{ \psi } B^{h_T}_{E_T}( \overline{A_T} f\circ T  ) = \sup_{x\in \mathbb B(0,1)} (1-|x|^2)^{\frac{3}{2}} |\overline{{}^{\psi}D} \ [ \overline{A_T} f\circ T ] (x)| |E_T(x)|     \\ 
=&   \sup_{x\in \mathbb B(0,1)} (1-|x|^2)^{\frac{3}{2}} | {}^{\overline{\psi}}D [ \overline{A_T} f\circ T]  (x)| |E_T(x)|     \\  
= &  \sup_{x\in \mathbb B(0,1)}
 (1-|x|^2)^{\frac{3}{2}} | \overline{ B_T (x)} \  {}^{\overline \psi}D  [f] (T  (x) ) 
|
 |B_T(x) |^{-1}  \left(\frac{1-|T(x)|^2  } {1-|x|^2}\right)^{\frac{3}{2}}  \\ 
= &  \sup_{y\in \mathbb B(0,1)} (1-|y|^2)^{\frac{3}{2}} |  \overline{ {}^{ \psi}D } f  (y) |   
= {}^{ \psi } B^{1}_{1}(f ).
\end{align*}
where \eqref{invarianCauchyconj} was used. 
  
 \noindent
   3. {{  
Again, using \eqref{invarianCauchyconj},
\begin{align*}
&{}^{\psi}\mathcal Q^{h_T}_{p,M_{T}}
(\overline{A_T}(f\circ T))
\\
&=
\sup_{a\in\mathbb B(0,1)}
\int_{\mathbb B(0,1)}
\left|
{}^{\overline{\psi}}D
[\overline{A_T}(f\circ T)](x)
\right|^2
g^p(x,a)M_{T,a}(x)\,dB_x
\\
&=
\sup_{a\in\mathbb B(0,1)}
\int_{\mathbb B(0,1)}
|B_T(x)|^2
\left|
{}^{\overline{\psi}}D[f](T(x))
\right|^2
g^p(x,a)
\\
&\qquad\qquad\times
\frac{g^p(T(x),a)}{g^p(x,a)}
|B_T(x)|^{-2}|J_T(x)|\,dB_x
\\
&=
\sup_{a\in\mathbb B(0,1)}
\int_{\mathbb B(0,1)}
\left|
{}^{\overline{\psi}}D[f](T(x))
\right|^2
g^p(T(x),a)|J_T(x)|\,dB_x.
\end{align*}
Using the change of variables $y=T(x)$, we obtain
\[
{}^{\psi}\mathcal Q^{h_T}_{p,M_{T,a}}
(\overline{A_T}(f\circ T))
=
{}^{\psi}\mathcal Q^1_{p,1}(f).
\]
}}

\noindent
   4.{{Finally,
\begin{align*}
&\int_{\mathbb B(0,1)}
\left|
{}^{\overline{\psi}}\!D
[\overline{A_T}(f\circ T)](x)
\right|^2
N_T(x)\,dB_x
\\
&=
\int_{\mathbb B(0,1)}
|B_T(x)|^2
\left|
{}^{\overline{\psi}}\!D[f](T(x))
\right|^2
|B_T(x)|^{-2}|J_T(x)|\,dB_x
\\
&=
\int_{\mathbb B(0,1)}
\left|
{}^{\overline{\psi}}\!D[f](T(x))
\right|^2
|J_T(x)|\,dB_x
\\
&=
\int_{\mathbb B(0,1)}
\left|
{}^{\overline{\psi}}\!D[f](y)
\right|^2\,dB_y.
\end{align*}
This proves the four assertions.}} 
\end{proof}

\begin{corollary}\label{cor12}
${}^{ \overline{A_T}}M \circ W_ T : {}^{\psi}\mathcal M^1 (\mathbb B(0,1))   \to  {}^{\psi}\mathcal M^{h_T} (\mathbb B(0,1))  $ is a bijective operator and its inverse operator is  
${}^{ \overline{A_{T^{-1}}}}M \circ W_ {T^{-1}} : {}^{\psi}\mathcal M^{h_T}  (\mathbb B(0,1))   \to  {}^{\psi}\mathcal M^{1} (\mathbb B(0,1)) $  
\end{corollary}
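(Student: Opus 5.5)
Write $\Phi_T:={}^{\overline{A_T}}M\circ W_T$, so that $\Phi_T[f]=\overline{A_T}\,(f\circ T)$. Write $\Psi:={}^{\overline{A_{T^{-1}}}}M\circ W_{T^{-1}}$, so that $\Psi[g]=\overline{A_{T^{-1}}}\,(g\circ T^{-1})$. The plan is to show that $\Psi\circ\Phi_T$ and $\Phi_T\circ\Psi$ are the identity on the relevant modules, and that $\Psi$ maps ${}^{\psi}\mathcal M^{h_T}(\mathbb B(0,1))$ into ${}^{\psi}\mathcal M^{1}(\mathbb B(0,1))$. Item 1 of Proposition \ref{prop1} already gives that $\Phi_T$ is well defined into ${}^{\psi}\mathcal M^{h_T}(\mathbb B(0,1))$. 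Both maps are right $\mathbb H$-linear and $T(\mathbb B(0,1))=\mathbb B(0,1)$, so all compositions make sense on the ball.

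\emph{Step 1: algebraic inverse.} We have
\[
\Psi\Phi_T[f](y)=\overline{A_{T^{-1}}(y)}\;\overline{A_T(T^{-1}(y))}\,f(y)=\overline{A_T(T^{-1}(y))\,A_{T^{-1}}(y)}\,f(y).
\]
Proposition \ref{inverOperta}, applied at $x=T^{-1}(y)$, gives $A_T(x)A_{T^{-1}}(T(x))=1$, so the bracket equals $1$. The reverse composition is handled the same way using the other identity $A_{T^{-1}}(T(x))A_T(x)=1$, this time for the pair $(T^{-1},T)$; equivalently, apply Proposition \ref{inverOperta} to $T^{-1}$, which is again a Möbius map of the ball. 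Conjugation reverses order, which is exactly why we need both one-sided identities. Consequently $\Phi_T$ is injective, and it is surjective onto the image of $\Psi$.

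\emph{Step 2: $\Psi$ lands in ${}^{\psi}\mathcal M^1$.} Take $g\in{}^{\psi}\mathcal M^{h_T}$, i.e.\ ${}^{\psi}D[h_Tg]=0$, and set $f:=\Psi[g]$, so that $g=\Phi_T[f]=\overline{A_T}\,(f\circ T)$ by Step 1. Identity \eqref{consecuen12} is a pointwise computation valid for any $C^1$ function $f$; it did not use hyperholomorphy. It gives
\[
0={}^{\psi}D[h_Tg]=-B_T\,({}^{\psi}D[f]\circ T).
\]
Since $B_T(x)\neq0$ on the ball (it is a product of nonzero quaternions), this forces ${}^{\psi}D[f]\circ T=0$, and because $T$ is onto $\mathbb B(0,1)$, ${}^{\psi}D[f]=0$. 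Hence $f\in{}^{\psi}\mathcal M^1$. Together with Step 1, $\Phi_T$ is a bijection with inverse $\Psi$.

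The main obstacle is the bookkeeping in Step 1. One must check that Proposition \ref{inverOperta} is applied to the correct map and in the correct order once quaternionic conjugation reverses products. One must also make sure that the $A_T$ defined for this specific $T$ agrees with the one coming from the decomposition in Proposition \ref{Moebius}, so that $A_{T^{-1}}$ is computed consistently. A secondary check is that $\overline{A_T}$ and $B_T$ are $C^1$ and nonvanishing on $\mathbb B(0,1)$. This holds because the pole $-c^{-1}d=\bar b^{-1}$ lies outside the closed ball, so $f\circ T$ and the multiplications preserve $C^1$-regularity.
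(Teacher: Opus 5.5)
Your route is the paper's route. The paper's whole proof is ``Follows from Proposition \ref{inverOperta}'', which is your Step 1. Your Step 2, using \eqref{consecuen12} (valid for every $C^1$ function) and $B_T\neq 0$, adds the check the paper leaves implicit: that the candidate inverse lands in ${}^{\psi}\mathcal M^{1}(\mathbb B(0,1))$.

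The issue you defer as ``bookkeeping'' is a genuine gap, and it comes from Proposition \ref{inverOperta}. That proposition is verified only for the four elementary maps and then extended to compositions by ``consequently''. The extension fails when $A_T$ is read as the formula of Proposition \ref{conformalproperyofD} in the coefficients $a,b,c,d$. Dilations receive weight $A=1$, while the inversion weight $\overline{x}/|x|^3$ is homogeneous of degree $-2$. So the weights are not multiplicative along different factorizations: $x^{-1}=\lambda^{-1}(\lambda^{-1}x)^{-1}$ produces both $\overline{x}/|x|^3$ and $\lambda^2\overline{x}/|x|^3$.

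Two concrete failures:
\begin{itemize}
\item For $T(x)=\lambda x^{-1}$ (with $a=0$, $b=\lambda$, $c=1$, $d=0$) one has $T^{-1}=T$ and $A_T(x)=\lambda^2\overline{x}/|x|^3$. Hence $A_T(x)A_{T^{-1}}(T(x))=\lambda^2$.
\item For the corollary's own map with $b=t\in(0,1)$ real, $T$ is again an involution and $A_T(x)=\frac{(1-t^2)^2}{t^2}\,\frac{1-t\overline{x}}{|1-tx|^3}$. Hence $A_T(x)A_{T^{-1}}(T(x))=(1-t^2)^2/t^4$ for every $x$, which equals $9$ when $t=1/2$.
\end{itemize}
So your Step 1 actually gives $\Psi\circ\Phi_T=\kappa\,\mathrm{Id}$ with $\kappa=(1-t^2)^2/t^4$, not the identity. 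The claim that the inverse is ${}^{\overline{A_{T^{-1}}}}M\circ W_{T^{-1}}$ is therefore correct only up to this constant. Alternatively it holds after renormalizing the weights so that $|A_T|=|J_T|^{1/3}$ (for instance $A=\lambda$, $B=\lambda^2$ for dilations), which makes $T\mapsto A_T$ a genuine cocycle.

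Bijectivity itself survives, and your Step 2 is exactly what proves it once Step 1 is dropped.
\begin{itemize}
\item Define the inverse directly by $g\mapsto(\overline{A_T}^{-1}g)\circ T^{-1}$. This is $C^1$ on the ball, because $A_T$ has neither zeros nor poles there.
\item Then $g=\Phi_T\bigl[(\overline{A_T}^{-1}g)\circ T^{-1}\bigr]$ holds tautologically.
\item Injectivity of $\Phi_T$ follows from $A_T\neq 0$ and the bijectivity of $T$.
\item Your Step 2 argument, unchanged, shows that $(\overline{A_T}^{-1}g)\circ T^{-1}\in{}^{\psi}\mathcal M^{1}(\mathbb B(0,1))$ whenever $g\in{}^{\psi}\mathcal M^{h_T}(\mathbb B(0,1))$.
\end{itemize}
What you cannot claim without correcting the weights is that this inverse coincides with ${}^{\overline{A_{T^{-1}}}}M\circ W_{T^{-1}}$.
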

\begin{proof}
Follows from Proposition \ref{inverOperta}.
\end{proof}
  
\begin{corollary}\label{cor123}\  {} 
\begin{enumerate} 
\item ${}^{\psi}\mathcal M^1   (\mathbb B(0,1)) $ and  $ {}^{\psi}\mathcal M^{h_T}  (\mathbb B(0,1)) $ are isomorphic $\mathbb H$-right modules. 
\item ${}^{\psi}\mathcal B^1_1 $ and ${}^{\psi}\mathcal B^{h_T}_{E_T}$ are homeomorphic and isomorphic $\mathbb H$-right  modules. 
\item ${}^{\psi}\mathcal Q^1_1 $ and ${}^{\psi}\mathcal Q^{h_T}_{p,M_T}$ are  homeomorphic and isomorphic $\mathbb H$-right   modules.  
\item ${}^{\psi}\mathcal D^1_1 $ and  $ {}^{\psi}\mathcal D^{h_T}_{N_T}$ 
are homeomorphic and isomorphic $\mathbb H$-right Hilbert modules.   
\end{enumerate}
\end{corollary}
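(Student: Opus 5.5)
The plan is to derive all four items from Proposition \ref{prop1} and Corollary \ref{cor12}. Write $\Phi_T := {}^{\overline{A_T}}M \circ W_T$. This operator is quaternionic right linear, because $\overline{A_T}(f\lambda)\circ T = (\overline{A_T} f\circ T)\lambda$ for every $\lambda\in\mathbb H$.

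For item 1, Proposition \ref{prop1}(1) shows that $\Phi_T$ maps ${}^{\psi}\mathcal M^1(\mathbb B(0,1))$ into ${}^{\psi}\mathcal M^{h_T}(\mathbb B(0,1))$. Corollary \ref{cor12} shows it is bijective with inverse $\Phi_{T^{-1}}$. Hence $\Phi_T$ is an isomorphism of $\mathbb H$-right modules. Behind Corollary \ref{cor12} lies the pointwise identity $\overline{A_T}(x)\,\overline{A_{T^{-1}}}(T(x))=1$, which is the conjugate of Proposition \ref{inverOperta}. I will also use that $T$ is an involution of $\mathbb B(0,1)$, since $T=\varphi_b$.

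For items 2--4, the computations in the proof of Proposition \ref{prop1} give more than a mapping property. They give exact equalities of seminorms:
\[
{}^{\psi}\mathcal B^{h_T}_{E_T}(\Phi_T f)={}^{\psi}\mathcal B^1_1(f),\qquad {}^{\psi}\mathcal Q^{h_T}_{p,M_T}(\Phi_T f)={}^{\psi}\mathcal Q^1_{p,1}(f),
\]
and similarly for the Dirichlet integrals with weight $N_T$. So $\Phi_T$ maps each source space injectively into the corresponding target space and preserves the seminorm. Surjectivity onto the target follows from Corollary \ref{cor12}. Given $g$ in the target module, put $f=\Phi_{T^{-1}}g\in{}^{\psi}\mathcal M^1$; then $\Phi_T f=g$. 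The seminorm equality, read from right to left, shows that $f$ lies in the source space. Consequently $\Phi_T$ restricts to a bijective, right linear isometry with respect to these seminorms. An isometry is continuous, and so is its inverse, which gives the homeomorphism.

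In item 4, the Hilbert module structure comes from the inner product $\int \overline{{}^{\overline\psi}Df}\,{}^{\overline\psi}Dg\,\rho\,dB$, or from its version with the constant term added so that it is definite. The same change of variables $y=T(x)$ shows that $\Phi_T$ preserves this inner product. The change of variables uses \eqref{invarianCauchyconj} and the fact that $|B_T|^{-2}|J_T|$ is real and scalar, so it commutes with the quaternions.

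The main obstacle is topological. These are seminorms: they vanish on constants, or, after twisting by $A_T$, on the corresponding kernel elements. To obtain a genuine homeomorphism I would pass to norms by adding a point evaluation, for instance $|f(0)|$ paired with $|\Phi_T f(T^{-1}(0))|=|\overline{A_T}(T^{-1}(0))|\,|f(0)|$. The point evaluation needs to be chosen so that it is compatible with $\Phi_T$, and this bounded scalar factor has to be controlled; it is nonzero because $A_T$ is invertible on the ball. I would also take care that the weights $E_T,M_T,N_T$ are positive measurable functions on $\mathbb B(0,1)$, which holds because $B_T\neq0$ there. Note that $M_T$ depends on $a$, so the target seminorm must be read with $M_{T,a}$ inside the supremum, exactly as in Proposition \ref{prop1}(3). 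Finally, item 3 as stated writes ${}^{\psi}\mathcal Q^1_1$; I read it as ${}^{\psi}\mathcal Q^1_{p,1}$.
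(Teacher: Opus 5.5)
Your main argument is the paper's route. The paper gives Corollary \ref{cor123} no separate proof; it is meant to follow directly from the exact seminorm identities in the proof of Proposition \ref{prop1} and the bijectivity in Corollary \ref{cor12}. Your steps reproduce that argument:
right linearity of $\Phi_T={}^{\overline{A_T}}M\circ W_T$; the inverse $\Phi_{T^{-1}}$; the seminorm equalities read in both directions to get surjectivity; isometry implies homeomorphism; and the change of variables for the Dirichlet pairing. You also adopt the sensible readings $M_{T,a}$ inside the supremum and ${}^{\psi}\mathcal Q^1_{p,1}$ in item 3.

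The step that would fail is your proposed fix of the ``topological obstacle''. The seminorms do not vanish only on constants. For $f\in{}^{\psi}\mathcal M^1(\mathbb B(0,1))$, the relation ${}^{\psi}Df=0$ gives $\psi_1\partial_1 f+\psi_2\partial_2 f=-\partial_0 f$, hence ${}^{\overline{\psi}}Df=2\partial_0 f$. So every $\psi$-hyperholomorphic function independent of $x_0$ has zero Bloch, $\mathcal Q_p$ and Dirichlet seminorm. For instance, $f(x)=x_1\psi_2+x_2\psi_1$ satisfies ${}^{\psi}Df=\psi_1\psi_2+\psi_2\psi_1=0$, ${}^{\overline{\psi}}Df=0$ and $f(0)=0$. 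The functions $(x_2+\psi_1\psi_2x_1)^n$, $n\geq 1$, give an infinite-dimensional family of such examples.

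Consequently:
\begin{itemize}
\item $|f(0)|$ plus the seminorm is still only a seminorm.
\item The pairing with $\overline{f(0)}g(0)$ added is not definite, so it does not give the Hilbert module in item 4.
\end{itemize}

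The detour is also unnecessary for what is actually claimed. A bijective right linear map that preserves the seminorms is already a homeomorphism for the seminorm topologies, and that is all the paper's argument yields. If you want genuine norms and a genuine inner product, add an $L^2$ term instead. For example, use $\int\overline{f}g\,dB$ on the source and $\int\overline{F}G\,|A_T|^{-2}|J_T|\,dB$ on the target. Since $\overline{\Phi_T f}\,\Phi_T g=|A_T|^2\,\overline{f\circ T}\,(g\circ T)$, the same change of variables shows that $\Phi_T$ preserves this exactly.

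A minor point: $E_T$ is quaternion-valued, not a positive function, so the weight that actually enters item 2 is $|E_T|$.
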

From Remark \ref{remarkbasic}, Proposition \ref{prop1} and  Corollaries \ref{cor12} and \ref{cor123} we directly obtain the following result:

\begin{corollary} \label{cor1234}  {} \ 
\begin{enumerate}
\item For $0\leq p < 3$ one sees that   ${}^{ \psi_{std}}  \mathcal Q^{h_T}_{q,M_T}  \subset   {}^{ \psi_{std}}\mathcal B^{h_T}_{E_T} $. 
\item  If $f\in {}^{{{\psi_{std}}}}\mathcal M^{h_T}  (\mathbb B(0,1))$ then  the following conditions are equivalent:
\begin{enumerate}
\item  $f\in  {}^{\psi_{std}}  \mathcal B^{h_T}_{E_T}  $.
\item  ${}^{\psi_{std}}   \mathcal Q^{h_T}_{p, M_T}  (f)<\infty$  for all  $2<p<3$.
\item  ${}^{\psi_{std}}  \mathcal Q^{h_T}_{p, M_T}  (f)<\infty$   for some $p>2$.
\end{enumerate}
\end{enumerate}
\end{corollary}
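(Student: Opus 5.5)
The plan is to transport the known facts of Remark \ref{remarkbasic} (items 1 and 2) through the operator $\Phi:={}^{\overline{A_T}}M\circ W_T$. By Corollary \ref{cor12}, $\Phi$ is a bijection from ${}^{\psi}\mathcal M^1(\mathbb B(0,1))$ onto ${}^{\psi}\mathcal M^{h_T}(\mathbb B(0,1))$, with inverse ${}^{\overline{A_{T^{-1}}}}M\circ W_{T^{-1}}$. Proposition \ref{prop1} gives seminorm identities rather than mere inclusions. The computations in items 2 and 3 of that proof show
\[
{}^{\psi}\mathcal B^{h_T}_{E_T}(\Phi f)={}^{\psi}\mathcal B^1_1(f),\qquad {}^{\psi}\mathcal Q^{h_T}_{p,M_T}(\Phi f)={}^{\psi}\mathcal Q^1_{p,1}(f)
\]
for every $f\in{}^{\psi}\mathcal M^1(\mathbb B(0,1))$ and every $p>0$. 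We specialize throughout to $\psi=\psi_{std}$, so that Remark \ref{remarkbasic} applies. We also read the index $q$ in item 1 of the statement as $p$.

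For item 1, we take $F\in{}^{\psi_{std}}\mathcal Q^{h_T}_{p,M_T}$ with $0\le p<3$. By surjectivity, we write $F=\Phi f$, where $f=\Phi^{-1}F\in{}^{\psi_{std}}\mathcal M^1(\mathbb B(0,1))$. The $\mathcal Q$-identity gives ${}^{\psi_{std}}\mathcal Q^1_{p,1}(f)={}^{\psi_{std}}\mathcal Q^{h_T}_{p,M_T}(F)<\infty$, so $f\in{}^{\psi_{std}}\mathcal Q^1_{p,1}$. Remark \ref{remarkbasic}(1) then yields $f\in{}^{\psi_{std}}\mathcal B^1_1$. The Bloch identity gives ${}^{\psi_{std}}\mathcal B^{h_T}_{E_T}(F)={}^{\psi_{std}}\mathcal B^1_1(f)<\infty$, which is the claimed inclusion.

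For item 2, we take $F\in{}^{\psi_{std}}\mathcal M^{h_T}(\mathbb B(0,1))$ and again set $f=\Phi^{-1}F$. By the two seminorm identities, each of (a), (b) and (c) for $F$ is equivalent to the corresponding condition for $f$: membership in ${}^{\psi_{std}}\mathcal B^1_1$ for (a), finiteness of ${}^{\psi_{std}}\mathcal Q^1_{p,1}(f)$ for all $2<p<3$ for (b), and for some $p>2$ for (c). These three conditions on $f$ are equivalent by Remark \ref{remarkbasic}(2), which completes the proof.

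The main point requiring care is that the weights are handled consistently. The weight $M_T=M_{T,a}$ depends on $a$, and $E_T$ enters through its modulus. We must check that the seminorm identities hold exactly, as equalities for each fixed $a$ before taking the supremum, and not merely as one-sided inclusions. This is verified by inspecting the change of variables $y=T(x)$, which uses that $T$ maps $\mathbb B(0,1)$ onto itself bijectively with Jacobian $|J_T|$. For the Bloch seminorm, it also uses that $|\overline{B_T}|\,|B_T^{-1}|=1$.
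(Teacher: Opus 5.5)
\textbf{A step fails.} Your route is the paper's own. The paper also obtains Corollary \ref{cor1234} ``directly'' by pushing items 1--2 of Remark \ref{remarkbasic} through $\Phi={}^{\overline{A_T}}M\circ W_T$, via Corollary \ref{cor12} and the seminorm identities of Proposition \ref{prop1}. Surjectivity of $\Phi$ is fine; it follows from \eqref{consecuen12} alone. The two exact seminorm identities you quote, however, are false, so the step that carries the whole argument fails, and the paper's one-line proof has the same hole.

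The problem is not where you locate it. The change of variables and the $a$-dependence of $M_{T,a}$ are harmless. The real issue is that items (2)--(4) of Proposition \ref{prop1} rest on \eqref{invarianCauchyconj}, which does not follow from Proposition \ref{conformalproperyofD}. Conjugating \eqref{invarianCauchy} reverses products, so it only gives the right-operator identity $\sum_k\partial_k\bigl(\overline{f\circ T}\,\overline{A_T}\bigr)\overline{\psi_k}=\bigl(\sum_k\partial_k\overline{f}\,\overline{\psi_k}\bigr)\circ T\;\overline{B_T}$.

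\textbf{A concrete counterexample.} Take $\psi=\psi_{std}$ and $b=\frac12\mathbf{e}_1$. Then $A_T(x)=\mathbf{e}_1\,\overline{z}\,|z|^{-3}$ with $z=\frac13\mathbf{e}_1x\mathbf{e}_1+\frac23\mathbf{e}_1$. To first order,
$\overline{A_T}(x)=\frac{9}{4}+\frac{9}{8}x_0\mathbf{e}_1+\frac{9}{4}x_1+\frac{9}{8}x_2\mathbf{e}_3+\dots$,
so ${}^{\overline\psi}D[\overline{A_T}](0)=-\frac{9}{4}\mathbf{e}_1\neq0$. By contrast, $\sum_k\partial_k\overline{A_T}(0)\,\overline{\psi_k}=0$, as the right-operator identity requires.

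Now take $f\equiv1\in{}^{\psi_{std}}\mathcal M^1(\mathbb B(0,1))$. Then ${}^{\psi_{std}}\mathcal B^1_1(f)={}^{\psi_{std}}\mathcal Q^1_{p,1}(f)=0$. But $\Phi f=\overline{A_T}$ has strictly positive ${}^{\psi_{std}}\mathcal B^{h_T}_{E_T}$- and ${}^{\psi_{std}}\mathcal Q^{h_T}_{p,M_T}$-seminorms. Hence none of the following is justified:
\begin{itemize}
\item your step ``$F\in\mathcal Q^{h_T}_{p,M_T}\Rightarrow f\in\mathcal Q^1_{p,1}$'';
\item your step ``$f\in\mathcal B^1_1\Rightarrow F\in\mathcal B^{h_T}_{E_T}$'';
\item the transported equivalences in item 2.
\end{itemize}

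\textbf{What is true, and what a repair would need.} Use the inner automorphism $\tau(q)=\psi_3q\overline{\psi_3}$ in place of conjugation. It fixes $1$ and sends $\psi_1,\psi_2$ to $\overline{\psi_1},\overline{\psi_2}$. Applying it to \eqref{invarianCauchy} gives ${}^{\overline\psi}D[\tau(A_T)\,f\circ T]=\tau(B_T)\,{}^{\overline\psi}D[f]\circ T$, with $|\tau(B_T)|=|B_T|$.

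For each elementary factor of $\varphi_b$ (translations, dilation, rotation with $r\in{}^{\psi}\cA$, inversion) one has $\tau(A)=\overline{A}$ and $\tau(B)=\overline{B}$. That is why \eqref{invarianCauchyconj} looks right piece by piece. For the composite, conjugation reverses the order of the non-commuting factors $\mathbf{e}_1$ and $\overline{z}|z|^{-3}$, while $\tau$ does not. Thus $\tau(A_T)=-\mathbf{e}_1z|z|^{-3}$, which differs from $\overline{A_T}=-z\mathbf{e}_1|z|^{-3}$ whenever $z\notin\mathrm{span}\{1,\mathbf{e}_1\}$.

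If you replace $\Phi$ by $f\mapsto\tau(A_T)\,f\circ T$ and $h_T$ by $A_T\,\tau(A_T)^{-1}$, the bijection and the exact seminorm identities do hold with the same weights. Your transport argument then goes through verbatim, but only for these modified modules. For the modules in the statement (built from $h_T$ and $\overline{A_T}$), you would need a genuine comparison between the seminorms of $\overline{A_T}\,f\circ T$ and of $f$. In particular it must control the zero-order term $({}^{\overline\psi}D\overline{A_T})\,f\circ T$. Neither your proposal nor the paper supplies this.
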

  
\begin{remark} We should point out that results given in Proposition \ref{prop1} and Corollaries \ref{cor12}, \ref{cor123}, \ref{cor1234} enhance the study of quaternionic ${}^{\psi}{\mathcal{Q}}_{p,h,\rho}$ spaces appeared in \cite{GKST}.  
\end{remark}

\section*{Statements and Declarations}
\subsection*{Funding} This work was partially supported by Instituto Polit\'ecnico Nacional (grant numbers IND-2026-0491, IND-2026-0101) and SECIHTI (grant number 1179388).
\subsection*{Competing Interests} No competing interests appear to influence the work reported in this paper are disclosed by the authors
\subsection*{Author contributions} IPB and JOGC conceived the study and writing of the paper. JBR and LFRO oversaw the project progress. All authors provided critical feedback and helped shape the research, analysis and manuscript. 
\subsection*{ORCID}
\noindent
Isidro Paulino-Basurto: https://orcid.org/0009-0003-4550-4851
\\
Jos\'e Oscar Gonz\'alez-Cervantes: https://orcid.org/0000-0003-4835-5436
\\
Juan Bory-Reyes: https://orcid.org/0000-0002-7004-1794
\\
Lino Feliciano Res{\'e}ndis-Ocampo: https://orcid.org/0000-0002-3274-1677

\end{document}